\documentclass{amsart}
\usepackage{amssymb}
\usepackage[initials]{amsrefs}
\usepackage{tikz-cd}
\usepackage[english]{babel}

\usepackage{graphicx}
\usepackage{latexsym}
\usepackage{amsmath,amssymb,amsthm,amsfonts,amscd}
\usepackage{amssymb, mathrsfs, enumerate}
\usepackage{mathtools}
\usepackage{colonequals}
\usepackage[dvipsnames]{xcolor}
\usepackage{comment}

\renewcommand{\epsilon}{\varepsilon}

\usepackage{mleftright}

\usepackage{tikz}	
\usepackage{pgfplots}	
\usetikzlibrary{arrows, angles, quotes, calc,through,backgrounds,matrix,decorations.markings,decorations.pathmorphing,
	intersections, pgfplots.fillbetween, patterns}
 \usepgfplotslibrary{polar}
\pgfplotsset{compat=newest}
\usepackage{multirow}
\usepgfplotslibrary{fillbetween}
\pgfplotsset{compat=1.17}
\pgfdeclarelayer{ft}
\pgfdeclarelayer{bg}
\pgfsetlayers{bg,main,ft}

\usepackage{thmtools}

\usepackage[linkcolor=black, urlcolor=black, citecolor=black, colorlinks=true, hypertexnames=false]{hyperref}
\usepackage[capitalize, nameinlink, noabbrev]{cleveref} 

\newcommand{\progr}{\mathcal{A}}
\newcommand{\graph}{\mathcal{G}}

\newcommand{\tree}{\mathcal{T}}

\newcommand{\aut}{\operatorname{Aut}}

\newcommand{\Sym}{\operatorname{Sym}}
\newcommand{\universal}{\mathcal{U}}
\newcommand{\id}{\operatorname{id}}

\renewcommand*{\S}{\mathcal S}

\newtheorem{thm}{Theorem}[section]

\newtheorem{prop}[thm]{Proposition}

\theoremstyle{definition}

\newtheorem{defn}[thm]{Definition}

\newtheorem{construction}[thm]{Construction}
\newtheorem{ex}[thm]{Example}

\newtheorem{rem}[thm]{Remark}

\title[Quasi-isometries between graphs of tdlc groups]{Quasi-isometries between graphs of totally disconnected locally compact groups}

\author{Sebastian Giersbach}
\address{JLU Giessen, Germany}
\email{sebastian.giersbach@uni-giessen.de}

\date{}

\begin{document}

\begin{abstract}
    Let $G$ and $H$ be compactly generated totally disconnected locally compact (tdlc) groups that decompose as finite graphs of tdlc groups $(\graph, \progr)$ and $(\mathcal{H}, \mathcal{B})$ such that all edge groups are compact and all vertex groups have at most one end. We generalize a result of Papasoglu--Whyte to tdlc groups and show that $G$ and $H$ are quasi-isometric if and only if they have the same number of ends and every one-ended vertex group of $(\graph, \progr)$ is quasi-isometric to a one-ended vertex group of $(\mathcal{H}, \mathcal{B})$, and vice versa. As an application, we construct uncountably many pairwise non-quasi-isometric compactly generated non-discrete simple tdlc groups, strengthening a result by Smith.
\end{abstract}

\maketitle

\section{Introduction}

Stallings' ends theorem \cite{stallings}*{5.A.10} states that a finitely generated group has more than one end if and only if it splits as an amalgam or an HNN extension over a finite subgroup. This motivates the notion of a \emph{terminal} graph of groups: It is a finite graph of groups consisting of finite edge groups and finitely generated vertex groups with at most one end. A finitely generated group that decomposes as such is called \emph{accessible}. Papasoglu and Whyte \cite{papasoglu+whyte}*{Theorem 0.4} showed that the quasi-isometry class of an accessible group depends only on its number of ends and the set of quasi-isometry classes of one-ended vertex groups in a terminal decomposition. By Dunwoody \cite{dunwoody1985}*{Theorem 5.1}, every finitely presented group is accessible. In particular, the quasi-isometric classification of finitely presented groups reduces to that of one-ended groups.

In this paper, we are interested in totally disconnected locally compact (tdlc) groups. By Abels \cite{abels}*{Struktursatz 5.7}, the analogue of Stallings' ends theorem also holds for tdlc groups: A compactly generated tdlc group has more than one end if and only if it splits as an amalgam or an HNN extension over a compact open subgroup. Thus, the definition of an accessible group generalizes to tdlc groups. We extend Papasoglu and Whyte's theorem to tdlc groups.

\begin{thm}\label{thm: Papasoglu-Whyte for tdlc groups}
    Let $G$ and $H$ be accessible compactly generated tdlc groups with decompositions $(\graph, \progr)$ and $(\mathcal{H}, \mathcal{B})$ into terminal graphs of tdlc groups. The groups $G$ and $H$ are quasi-isometric if and only if $G$ and $H$ have the same number of ends and $(\graph, \progr)$ and $(\mathcal{H}, \mathcal{B})$ have the same set of quasi-isometry classes of one-ended vertex groups.
\end{thm}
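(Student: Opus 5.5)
The plan is to follow the strategy of Papasoglu--Whyte, replacing the Cayley graph of a finitely generated group by a Cayley--Abels graph, i.e.\ a connected locally finite graph on which the compactly generated tdlc group acts vertex-transitively with compact open stabilizers. Such a graph is quasi-isometric to the group, so all statements can be transferred to locally finite graphs. For a terminal graph of tdlc groups $(\graph,\progr)$ with fundamental group $G$, I would build a \emph{tree of spaces} $X_G$ over the Bass--Serre tree $T_G$. Over each vertex $v$ sits a Cayley--Abels graph of the vertex group $G_v$, or a single point if $G_v$ is compact. Edge spaces are compact, so each edge of $T_G$ is realized by a bounded edge between vertex spaces. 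Every compact open edge group has finite index in a compact open subgroup of an adjacent vertex group, so the tree has finite valence modulo vertex spaces, and $X_G$ is locally finite. I expect $G$ to act on $X_G$ properly and cocompactly, which makes $X_G$ quasi-isometric to $G$.

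The \emph{only if} direction is the easier one. The number of ends is a quasi-isometry invariant. For the vertex groups, I would show that each one-ended vertex space in $X_G$ is the image of a quasi-isometric embedding. Since it is one-ended and the edge spaces are bounded, it cannot be coarsely separated by bounded sets. A quasi-isometry $X_G\to X_H$ therefore maps each one-ended vertex space to within bounded distance of a single vertex space of $X_H$, because any set crossing an edge space of $X_H$ would be coarsely separated by a bounded set. Applying the same to a quasi-inverse gives a coarse bijection between one-ended vertex spaces, so the two sets of quasi-isometry classes coincide.

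For the \emph{if} direction, split into cases by the number of ends.
\begin{itemize}
\item \textbf{Zero ends.} Then $G$ and $H$ are compact, and there is nothing to prove.
\item \textbf{One end.} Then $G$ is quasi-isometric to its unique one-ended vertex group, as all other vertex groups are compact and the Bass--Serre tree is finite; the same holds for $H$.
\item \textbf{Two ends.} Both groups are quasi-isometric to $\mathbb{Z}$.
\item \textbf{Infinitely many ends.} This is the main case.
\end{itemize}

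In the infinitely-ended case I would first collapse the compact vertex spaces: after contracting, $X_G$ is quasi-isometric to a tree of spaces $Y_G$. Its vertex spaces are the one-ended vertex spaces, together with "branching" points. Each one-ended vertex space has edge spaces attached along a net, with infinitely many branches. Then I would imitate the combinatorial heart of Papasoglu--Whyte. Their lemma says that in a one-ended (or at least non-amenable-free) bounded geometry space, any two coarsely dense, uniformly separated sets of attaching points with bounded multiplicity are related by a bounded-displacement bijection; they use the uniformly finite homology and Whyte's theorem on bilipschitz equivalences for non-amenable spaces, and treat the amenable case by a different trick of adding branching. Using such bijections, I would build by induction along the trees a quasi-isometry $Y_G\to Y_H$ that sends vertex spaces to quasi-isometric vertex spaces. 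At each step, the infinite valence at branch points lets us match every required type of one-ended piece and edge. The realization of each type follows from the equality of the sets of quasi-isometry classes and from the infinitude of ends, which forces every type to occur infinitely often near every point.

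The main obstacle is the uniform control in this back-and-forth construction: the attaching sets must be matched with uniformly bounded displacement, independent of the vertex. Papasoglu--Whyte use the freedom to redistribute edges among cosets via finite index subgroups of finite edge groups. I would try first to reduce to their graph-theoretic statement. It is phrased for bounded geometry spaces and trees of spaces, not groups, so once I verify that $Y_G$ has bounded geometry and that the attaching data are cocompact, it should apply verbatim. The tdlc-specific input is then just local finiteness, coming from the compact open edge groups and their finite-index inclusions.
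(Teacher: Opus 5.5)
Your ``only if'' direction and the $0$-, $1$- and $2$-ended cases are fine. They agree in substance with the paper, which also runs Papasoglu--Whyte's coarse-separation argument once vertex groups are known to be quasi-isometrically embedded. (Minor slip: in the one-ended case the Bass--Serre tree of a degenerate terminal decomposition can be infinite; $G$ simply equals its unique non-compact vertex group.)

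The gap is in the infinitely-ended ``if'' direction. That is where the actual work lies, and you defer it to a lemma and a trick that are not supplied.
\begin{itemize}
\item The matching lemma, as you state it for one-ended spaces, is false when the vertex space is amenable. In $\mathbb{Z}^2$ the attaching sets $\mathbb{Z}^2$ and $(2\mathbb{Z})^2$ admit no bounded-displacement bijection, by counting points in large balls. Such vertex groups are allowed, and the ``adding branching'' meant to replace the lemma is exactly what must be proved.
\item More seriously, you claim Papasoglu--Whyte's graph-theoretic results then apply verbatim, with local finiteness as the only tdlc input. This misses the genuinely new obstruction. In an HNN extension $A*_\varphi$ with $\varphi\colon C\to C'$, the groups $C,C'$ are infinite compact, so $[C:C\cap C']$ and $[C':C\cap C']$ can differ (e.g.\ $\varphi\colon\mathbb{Z}_p\to p\mathbb{Z}_p$). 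Then $C$ and $C'$ have no common left transversal in $A$, so the Hall-marriage projection Papasoglu--Whyte use for HNN extensions is unavailable.
\item The resulting tree of spaces is also not a free product of graphs. Each vertex of a copy of $\Gamma_{A,C}$ has $n$ edges into one neighbouring copy and $m$ edges coming from $m$ distinct copies. Hence their Lemma~1.1 and Theorem~0.1 do not apply to it.
\end{itemize}

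The paper closes this gap with a net construction.
\begin{itemize}
\item Given a quasi-isometry $f\colon\Gamma_{A,C}\to\Gamma_{B,D}$, choose an $(R,S)$-net $M$ on which $f$ is a bilipschitz bijection onto the net $N=f(M)$.
\item In every copy, merge the parallel connecting edges into one and move each connecting edge to a nearby net point. The connecting edges then form trees with all valencies between $3$ and a fixed $K$.
\item By Papasoglu's theorem, all such trees are uniformly bilipschitz equivalent. Using this, one assembles, copy by copy, a bilipschitz equivalence between the modified trees of spaces.
\item This yields quasi-isometry invariance of HNN extensions and $A*_C\simeq\Gamma_{A,C}*T_2$.
\item Combined with $A*_CB\simeq\Gamma_{A,C}*\Gamma_{B,C}$ and the identities $X*Y\simeq X*(Y*Y)$ and $X*X\simeq X*T_2$ (extended to finite factors), every infinitely-ended terminal decomposition reduces to $T_3$, $\Delta_1*T_2$ or $\Delta_1*\cdots*\Delta_n$, with one $\Delta_i$ per quasi-isometry class of one-ended vertex groups.
\end{itemize}
Your back-and-forth construction lacks a mechanism of this kind, one that absorbs mismatched attachment densities and mismatched branch types into bounded-valency branching.
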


In order to show the discrete version, Papasoglu and Whyte introduced free products of graphs; we state their definition and basic properties in \cref{sec: free products of graphs}. The implication that quasi-isometric accessible groups have the same set of quasi-isometry classes of one-ended vertex groups, generalizes to tdlc groups with little preparation needed. For the other implication, we treat amalgams and HNN extensions separately. While only little work has to be done for amalgams, HNN extensions require a separate proof. The proof of \cref{thm: Papasoglu-Whyte for tdlc groups} is content of \cref{sec: generalization to tdlc groups}.

In \cref{sec: non-quasi-isometric simple tdlc groups}, we strengthen a result by Smith \cite{smith}*{Corollary 39}. Denote by $\S$ the class of compactly generated non-discrete tdlc groups that are topologically simple. By introducing a group $\universal(M, N)$ associated to a pair of permutation groups $M$ and $N$, Smith constructed $2^{\aleph_0}$ pairwise non-isomorphic abstractly simple groups in $\S$. As the Smith group $\universal(M, N)$ splits under mild conditions as an amalgam over a compact open subgroup with factors quasi-isometric to $M$ and $N$, \cref{thm: Papasoglu-Whyte for tdlc groups} is applicable. By choosing appropriate groups $M$ and $N$, we get the following result.

\begin{thm}\label{thm: Uncountably many quasi-isometry classes in S}
    There are precisely $2^{\aleph_0}$ quasi-isometry classes of abstractly simple groups in $\S$.
\end{thm}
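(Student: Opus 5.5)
The plan is to bound the number of quasi-isometry classes from above by counting and from below by applying \cref{thm: Papasoglu-Whyte for tdlc groups} to a continuum of Smith groups.

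\textbf{Upper bound.} A compactly generated tdlc group is second countable modulo a compact normal subgroup. A topologically simple non-discrete group in $\S$ has no nontrivial compact normal subgroup, so it is itself second countable, hence a Polish group. Isomorphism classes of second countable locally compact groups number at most $2^{\aleph_0}$; for instance, each such group is determined by a closed subgroup of a fixed universal Polish group, or by its multiplication on a countable dense subset together with the topology. Hence there are at most $2^{\aleph_0}$ quasi-isometry classes in $\S$.

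\textbf{Lower bound.} I will use Smith's groups $\universal(M,N)$. Take $M$ to be a fixed finite transitive permutation group whose point stabilisers are generated appropriately (Smith's mild conditions), say $M=\Sym(3)$ acting on $3$ points. Take $N$ to range over a family of finitely generated (discrete, countable-degree) transitive permutation groups $N_s$, $s\in 2^{\mathbb N}$, which are one-ended and pairwise non-quasi-isometric. Bowditch's continuum of pairwise non-quasi-isometric one-ended finitely generated groups (small cancellation groups) is a natural source; each acts transitively and faithfully on itself. Smith's conditions also need a generated point stabiliser. I would check these conditions, replacing the action by an action on cosets of a suitable finite or trivial subgroup if necessary, and possibly passing to products such as $N_s\times\mathbb Z$ to guarantee one-endedness while staying non-quasi-isometric.

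Under these conditions Smith shows that $\universal(M,N_s)$ is abstractly simple, non-discrete and compactly generated, so it lies in $\S$. It also splits as an amalgam over a compact open subgroup $V$, with factors quasi-isometric to $M$, which is compact, and to $N_s$. This amalgam is a terminal graph of tdlc groups: the edge group is compact, the factor quasi-isometric to $M$ is compact and so $0$-ended, and the factor quasi-isometric to $N_s$ is one-ended. The group is infinitely-ended, since the splitting is nontrivial and the indices $|A:V|\ge 2$ and $|B:V|\geq 3$ are large enough. By \cref{thm: Papasoglu-Whyte for tdlc groups}, the quasi-isometry class of $\universal(M,N_s)$ therefore determines the quasi-isometry class of its unique one-ended vertex group, namely $[N_s]$. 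Distinct $s$ thus give non-quasi-isometric groups, which yields $2^{\aleph_0}$ classes.

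\textbf{Main obstacle.} The hardest step is producing the family $N_s$. Each $N_s$ must simultaneously satisfy Smith's hypotheses for $\universal(M,N)$ to be simple and non-discrete, be one-ended, and be pairwise non-quasi-isometric. I would first try regular actions of Bowditch's groups and verify Smith's conditions directly.
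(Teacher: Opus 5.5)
Your upper bound is fine. It is a self-contained substitute for the paper's citation of Smith's Corollary 39: Kakutani--Kodaira together with topological simplicity forces second countability, and there are at most $2^{\aleph_0}$ second countable locally compact groups. The architecture of your lower bound is also the paper's: a Smith group $\universal(\Sym(3), N_s)$, its terminal amalgam with one compact and one one-ended factor, then \cref{thm: Papasoglu-Whyte for tdlc groups}. The gap is exactly the step you flag as the main obstacle, and none of your concrete suggestions for it can work.

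\textbf{Why the proposed family fails.} Smith's simplicity criterion (\cref{prop: main properties of Smith groups}(5)) needs $N_s$ to be generated by its point stabilizers, and local compactness needs these to be compact. A countable group that is closed and locally compact in its permutation topology is discrete, by Baire. So compact point stabilizers are finite, and you need a finite $Q \le N_s$ with trivial core whose normal closure is all of $N_s$. Bowditch's small cancellation groups are torsion-free, as is $N_s \times \mathbb{Z}$. Their only admissible actions are therefore free, and passing to cosets of a ``suitable finite or trivial subgroup'' cannot help.

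This is not merely the failure of a sufficient condition. Let $N^+$ be the subgroup generated by point stabilizers and suppose $N^+ \lneq N$. The amalgam $G_v *_{G_{(v,w)}} G_w$ then maps onto $N/N^+$: use the local action on $G_w$, and the trivial map on $G_v$. This is well defined because the edge group lands in a point stabilizer. The kernel contains $G_v \neq 1$. So for a regular action, $\universal(\Sym(3), N_s)$ surjects onto $N_s$ and is not simple.

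\textbf{The missing idea.} Choose groups where torsion and simplicity do the work. The paper takes the $2^{\aleph_0}$ pairwise non-quasi-isometric finitely generated simple torsion groups $M_i$ of Minasyan--Osin--Witzel, acting on $M_i/Q_i$ for a proper non-trivial finite subgroup $Q_i$. Simplicity gives both trivial core (faithfulness) and normal closure $M_i$ (generation by point stabilizers). Being infinite and torsion gives one-endedness: a non-trivial splitting over a finite subgroup, or being virtually $\mathbb{Z}$, would produce an element of infinite order.
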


The discrete statement, i.e. that there exist $2^{\aleph_0}$ quasi-isometry classes of finitely generated simple groups, was proven by Minasyan, Osin and Witzel \cite{minasyan+osin+witzel}. Furthermore, we can arrange our examples to not contain any lattices.

\section{Preliminaries}\label{sec: preliminaries}

\subsection{Amalgams and HNN extensions of tdlc groups}

A group that decomposes as a finite graph of groups can be described by iteratively taking amalgams and HNN extensions. Thus, we will reduce many theorems to these base cases. For properties of amalgams and HNN extensions as abstract groups, we refer the reader to Lyndon and Schupp \cite{lyndon+schupp}*{Chapter IV}. As we are interested in tdlc groups, we also require their topological versions. Therefore, we briefly recall the definition and basic properties of amalgams and HNN extensions of topological groups, following Cornulier and de la Harpe \cite{cornulier+delaharpe}*{Chapter 8.B}.

\begin{prop}[\cite{cornulier+delaharpe}*{Propositions 8.B.9 and 8.B.10}]\label{prop: amalgams and HNN extensions of topological groups}
    \begin{enumerate}
        \item Let $A$ and $B$ be topological groups and let $C$ be a common open subgroup of $A$ and $B$. There exists a unique group topology on the amalgam $G = A *_C B$ such that $C$ is an open subgroup of $G$. Equipped with this topology, we call $G$ an \emph{amalgam of topological groups}.

        \item Let $A$ be a topological group and let $C$ and $C'$ be two open subgroups of $A$ with an isomorphism $\varphi \colon C \to C'$. There exists a unique group topology on the HNN extension $G = A *_\varphi$ such that $A$ is an open subgroup of $G$. Equipped with this topology, we call $G$ an \emph{HNN extension of topological groups}. We also write $A *_C$ for $A *_\varphi$.
    \end{enumerate}
\end{prop}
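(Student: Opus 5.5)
The plan is to construct the topology in each case from a neighbourhood basis of the identity inside the designated open subgroup, and then to check that this basis generates a group topology on the whole amalgam or HNN extension. The tool for this is the standard criterion: if a group $G$ contains a subgroup $C$ carrying a group topology, and every $g\in G$ induces a map $c\mapsto gcg^{-1}$ that is continuous at the identity on some open subgroup $C_g \leq C$ with $gC_gg^{-1}\subseteq C$, then the neighbourhoods of $1$ in $C$ form a basis at the identity of a unique group topology on $G$ in which $C$ is open. Since any group topology in which $C$ is open is determined by the neighbourhood filter of $1$, which must agree with that of $C$, uniqueness is immediate in both parts. The work is therefore in the existence, that is, in verifying the commensuration-type condition.

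For part (1), take $C$ with its topology (which agrees whether induced from $A$ or from $B$, being open in both). The set of $g$ satisfying the condition is closed under products and inverses, because compositions of conjugations defined and continuous on open subgroups are again of that form, after intersecting domains. So it suffices to check generators $g\in A\cup B$. For $a\in A$, put $C_a = C\cap a^{-1}Ca$. This is open in $C$ because conjugation by $a$ is a homeomorphism of $A$ and $C$ is open in $A$. Conjugation $C_a\to C$ is continuous since it is the restriction of a continuous map on $A$. The same argument works for $b\in B$. The criterion then yields the topology, and $A$ and $B$ become open subgroups as well, since they are unions of cosets of $C$.

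For part (2), apply the same scheme with $A$ itself as the designated subgroup. Generators from $A$ are handled trivially. For the stable letter $t$, with $tct^{-1}=\varphi(c)$ for $c\in C$, take $A_t = C$, which is open in $A$. Conjugation by $t$ then restricts to $\varphi\colon C\to C'\subseteq A$. Similarly, conjugation by $t^{-1}$ on $C'$ is $\varphi^{-1}$.

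The main obstacle is the implicit requirement that $\varphi$ be a topological isomorphism; it must be read into the hypothesis. Beyond that, the delicate step is the criterion itself: checking continuity of multiplication and inversion at arbitrary points. Here one shows that the sets $gU$, for $U$ ranging over identity neighbourhoods in $C$, satisfy the usual axioms for neighbourhood bases of a group topology, namely that for each $g$ and each $U$ there is $V$ with $gVg^{-1}\subseteq U$. This is exactly what the constructed $C_g$ provide. All of this is standard and is the content of the cited propositions of Cornulier--de la Harpe.
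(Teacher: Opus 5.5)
The paper gives no argument of its own here and only cites Cornulier--de la Harpe. Your proof, which extends the identity-neighbourhood filter of the designated open subgroup to a group topology via Bourbaki's criterion and checks the conjugation condition on the generating sets $A\cup B$ and $A\cup\{t,t^{-1}\}$, is the standard argument behind that citation and is correct.

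There are two minor slips to fix.

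\begin{enumerate}
\item Your condition on $g$ is stable under products but not under inverses in general. For $g^{-1}$ you need $gCg^{-1}\cap C$ to contain an open subgroup of $C$, and continuity of conjugation by $g$ does not guarantee this. This is harmless only because your generating sets are symmetric and you check $t^{-1}$ separately.

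\item In part (1), the topologies on $C$ induced from $A$ and from $B$ do not agree merely because $C$ is open in both. Like $\varphi$ being a topological isomorphism in part (2), this agreement is part of the hypothesis. It is also forced if $A$ and $B$ are to embed topologically in $G$.
\end{enumerate}
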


Suppose $G$ is an amalgam or HNN extension of topological groups. If the factors are tdlc groups, then so is $G$. By iteratively taking amalgams and HNN extensions of tdlc groups, we can define graphs of tdlc groups and equip their fundamental groups with a tdlc group topology.

\begin{defn}
    Let $(\graph, \progr)$ be a graph of groups. We call it a \emph{graph of tdlc groups} if all vertex groups $\progr_v$ for $v \in V(\graph)$ and all edge groups $\progr_e$ for $e \in E(\graph)$ are tdlc groups and if all edge homomorphisms $\progr_e \hookrightarrow \progr_v$ are continuous open embeddings.

    Suppose $(\graph, \progr)$ is a finite graph of tdlc groups with abstract fundamental group $G$. As $G$ can be described by iteratively taking amalgams and HNN extensions, we equip $G$ with the unique tdlc group topology such that every vertex group $\progr_v$ and every edge group $\progr_e$ is open in $G$ by \cref{prop: amalgams and HNN extensions of topological groups}.
\end{defn}

\subsection{Cayley--Abels graphs}

When talking about a finitely generated group as a geometric object, one usually studies its Cayley graph over some finite generating set. The analogue of Cayley graphs for compactly generated tdlc groups is the Cayley--Abels graph. For a thorough introduction on Cayley--Abels graphs, we refer the reader to Kr\"{o}n and M\"{o}ller \cite{kroen+moeller}.

\begin{defn}
    Let $G$ be a locally compact group. A \emph{Cayley--Abels graph} of $G$ is a locally finite connected graph on which $G$ acts vertex-transitively with compact open vertex stabilizers.
\end{defn}

A locally compact group admits a Cayley--Abels graph if and only if it is compactly generated and has a compact open subgroup, see \cite{cornulier+delaharpe}*{Proposition 2.E.9}. By van Dantzig's theorem, any tdlc group has a compact open subgroup. Thus, a tdlc group admits a Cayley--Abels graph if and only if it is compactly generated. One possible way to construct a Cayley--Abels graph is the following.

\begin{construction}[\cite{kroen+moeller}*{Construction 2}]\label{constr: definition of a Cayley--Abels graph}
    Let $G$ be a compactly generated tdlc group and let $U$ be a compact open subgroup of $G$. The left $U$-cosets form an open cover of any compact generating set of $G$. Thus, there are finitely many elements $g_1, \ldots, g_n \in G$ such that $G = \left< U, g_1, \ldots, g_n \right>$. We assume this generating set to be symmetric. Let $\Gamma$ be the Cayley graph of $G$ for this generating set. Consider the quotient graph of $\Gamma$ by identifying elements of the same left $U$-coset. Its vertex set is $G/C$ and two distinct cosets $g_0C$ and $h_0C$ are adjacent if there are elements $g \in g_0C$ and $h \in h_0C$ with $g = hg_i$ for some $1 \le i \le n$. This yields a Cayley--Abels graph of $G$. For example, the vertex stabilizers are the conjugates of $C$ and thus compact open subgroups. If the additional generators $g_1, \ldots, g_n$ are clear or do not matter, we denote the Cayley--Abels graph of $G$ with vertex set $G/C$ by $\Gamma_{G,C}$.
\end{construction}

As for Cayley graphs of finitely generated groups, any two Cayley--Abels graphs of a compactly generated tdlc group are quasi-isometric. Hence, the number of ends of a tdlc group is the number of ends of any of its Cayley--Abels graphs. As Cayley--Abels graphs are locally finite vertex-transitive graphs, a compactly generated tdlc group has either no ends, one end, two ends or infinitely many ends. The tdlc groups with no ends are the compact groups. Stallings' ends theorem generalizes to tdlc groups, characterizing tdlc groups with more than one end. Abels \cite{abels}*{Struktursatz 5.7} proved it for compactly generated locally compact groups in terms of Specker compactifications. Kr\"{o}n and M\"{o}ller \cite{kroen+moeller}*{Theorem 3.18} proved the same result for compactly generated tdlc groups using Bass--Serre theory.

\begin{thm}[Stallings' ends theorem for tdlc groups]
    Let $G$ be a compactly generated tdlc group. Then $G$ has at least two ends if and only if $G$ splits non-trivially over a compact open subgroup, i.e. if one of the following two cases holds:
    \begin{itemize}
        \item $G = A *_C B$ is an amalgam of tdlc groups, where $A$ and $B$ are compactly generated open subgroups of $G$, and $C$ is a proper compact open subgroup of both $A$ and $B$;

        \item $G = A *_C$ is an HNN extension of tdlc groups, where $A$ is a compactly generated open subgroup of $G$, and $C$ is a compact open subgroup of $A$.
    \end{itemize}
\end{thm}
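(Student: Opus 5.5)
The easy direction goes through Bass--Serre theory; the hard direction goes through Dunwoody's theory of nested cuts on a Cayley--Abels graph. In both we use that in a locally finite graph on which $G$ acts with compact open vertex stabilizers, the setwise stabilizer of a finite vertex set is compact open.

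\emph{Easy direction.} Suppose $G = A *_C B$ or $G = A *_C$ with $C$ compact open. Then $G$ acts on the Bass--Serre tree $T$ with compact open edge stabilizers (conjugates of $C$) and open vertex stabilizers. We take the Cayley--Abels graph $\Gamma_{G,C}$ of \cref{constr: definition of a Cayley--Abels graph}.
\begin{enumerate}
    \item Choose the additional generators inside $A \cup B$, respectively inside $A \cup \{t\}$ with $t$ the stable letter.
    \item Map each vertex $gC$ to the edge $gC$ of $T$. Adjacent vertices go to edges at uniformly bounded distance in $T$.
    \item Fix a vertex $gC$ of $\Gamma_{G,C}$. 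Removing the finite set of vertices of $\Gamma_{G,C}$ that map close to the edge $gC$ of $T$ separates the vertices lying over the two half-trees.
    \item In the amalgam case, $C$ is proper in both $A$ and $B$, so both half-trees contain infinitely many edges. In the HNN case, $T$ is infinite in both directions at every edge.
    \item Both sides contain infinitely many $C$-cosets, so $\Gamma_{G,C}$, and hence $G$, has at least two ends.
\end{enumerate}

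\emph{Hard direction.} Let $\Gamma$ be a Cayley--Abels graph of $G$ with at least two ends.
\begin{enumerate}
    \item There is a \emph{cut}: a vertex set $E$ such that $E$ and $V\Gamma\setminus E$ are both infinite and the coboundary $\delta E$ is finite.
    \item Among all cuts, run Dunwoody's argument (minimal $|\delta E|$, then a minimality condition within those) to find a cut $E$ whose $\aut(\Gamma)$-orbit, together with complements, is nested.
    \item This argument is purely combinatorial on the locally finite graph. It therefore applies verbatim to the image of $G$ in $\aut(\Gamma)$, regardless of whether the action is free.
    \item The nested $G$-invariant family $\{gE, gE^*\}$ satisfies the finite-interval condition, since it comes from finite coboundaries in a locally finite graph. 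Hence it defines a tree $T$ with an action of $G$ without inversions (after subdividing if necessary) and with one orbit of edges.
    \item The stabilizer of the edge $E$ preserves the finite set of endpoints of $\delta E$. It therefore lies in that set's setwise stabilizer, which is compact. It contains that set's pointwise stabilizer, which is open. So the edge stabilizer is compact open.
    \item No vertex of $T$ is fixed by $G$, since $E$ and $E^*$ are both infinite and $G$ acts vertex-transitively on $\Gamma$.
    \item Bass--Serre theory then yields a nontrivial splitting $G = A *_C B$ or $G = A *_C$ over the compact open subgroup $C = \operatorname{Stab}(E)$. The vertex groups are open, and $C$ is proper in the amalgam case because the action has no fixed vertex.
    \item By \cref{prop: amalgams and HNN extensions of topological groups}, the topology on $G$ is the amalgam/HNN topology.
    \item The vertex groups are compactly generated. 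A compact generating set of $G$ lies in finitely many double cosets. Cutting the corresponding elements into normal-form pieces gives finitely many elements which, together with $C$, generate $A$ and $B$.
\end{enumerate}

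The main obstacle is step 2, producing a cut whose $G$-translates are pairwise nested. I would import Dunwoody's proof, with Krön--Möller's adaptation, checking only that it uses local finiteness of $\Gamma$ and never freeness or discreteness of the acting group. Alternatively, one can cite Abels' Struktursatz 5.7 or Krön--Möller's Theorem 3.18 directly.
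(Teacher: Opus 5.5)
Your proposal is correct. The paper does not prove this theorem itself; it cites Abels' Struktursatz 5.7 (phrased via Specker compactifications) and Kr\"{o}n--M\"{o}ller's Theorem 3.18 (via Bass--Serre theory), and your outline is essentially the Kr\"{o}n--M\"{o}ller argument: the Bass--Serre tree for the easy direction, and for the hard direction a Dunwoody cut on a Cayley--Abels graph with nested translates, the resulting structure tree with compact open edge stabilizers, and Bass--Serre theory.
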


Suppose a finitely generated group $G$ has at least two ends. By Stallings' ends theorem, we have $G = A *_C B$ or $G = A *_C$ where $A$ and $B$ are finitely generated and $C$ is a finite subgroup. If some vertex group has at least two ends, we iterate by splitting it further. This process might terminate, in which case we get a decomposition of $G$ into a finite graph of groups $(\graph, \progr)$ with finite edge groups and vertex groups with at most one end. By Dunwoody \cite{dunwoody1985}*{Theorem 5.1}, every finitely presented group is accessible. However, there exist inaccessible groups, as first shown by Dunwoody \cite{dunwoody1993}. In a similar way, we define accessible tdlc groups.

\begin{defn}
    Let $G$ be a compactly generated tdlc group. It is \emph{accessible} if $G$ decomposes as a finite graph of tdlc groups $(\graph, \progr)$ such that the edge groups are compact open subgroups of $G$ and the vertex groups are compactly generated open subgroups of $G$ with at most one end. We call $(\graph, \progr)$ a \emph{terminal} graph of tdlc groups.
\end{defn}

Dunwoody's theorem can be generalized to tdlc groups. Every compactly presented tdlc group is accessible, see \cite{cornulier}*{Theorem 19.45}.

\section{Free products of graphs}\label{sec: free products of graphs}

As groups decomposing as finite graphs of groups are iterated amalgams and HNN extensions, it suffices to consider the base case of a single amalgam or a single HNN extension. One implication of Papasoglu and Whyte's main theorem essentially reduces to the following statement on free products.

\begin{thm}[\cite{papasoglu+whyte}*{Theorem 0.1}]\label{thm: quasi-isometric factors implies quasi-isometric free products of groups}
    Let $A, B$ and $C$ be non-trivial groups. If $A$ and $B$ are quasi-isometric, then the free products $A * C$ and $B * C$ are quasi-isometric unless $C$, and one of $A$ or $B$, has order $2$.     
\end{thm}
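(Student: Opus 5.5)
We sketch Papasoglu--Whyte's strategy, which works in the category of graphs. Fix finite symmetric generating sets and let $X_A$, $X_B$, $X_C$ be the Cayley graphs. The Cayley graph of $A * C$, with respect to the union of the generating sets, is the \emph{free product of graphs} $X_A * X_C$. This graph is built as a tree of spaces: it is a tree-like union of copies of $X_A$ and $X_C$. Distinct copies meet in single vertices. Every vertex of every copy of $X_A$ is a gluing point for exactly one copy of $X_C$, and vice versa. Under this description the statement reduces to a purely geometric claim:
\[
X_A \sim_{QI} X_B \implies X_A * X_C \sim_{QI} X_B * X_C,
\]
excluding the degenerate cases.

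First I would deal with the finite cases. If $A$ and $B$ are finite, quasi-isometric means both are nontrivial finite groups. Then $A * C$ and $B * C$ are virtually free, hence quasi-isometric to a regular tree of valence at least $3$, or to $\mathbb{Z}$ exactly in the excluded order-$2$ situations. The case where $C$ is finite and $A$ is infinite follows the same pattern: the vertices of $X_A$ get decorated by finite blobs.

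In the main case, a quasi-isometry $f\colon X_A \to X_B$ need not be a bijection, so it cannot simply be extended copy by copy. The first key step is to replace $f$ by a map that is \emph{bijective on vertices}, up to bounded distance. Whyte's theorem says a quasi-isometry between non-amenable bounded-geometry graphs is close to a bijection. In the amenable case this fails, but here we only need a weaker conclusion. We may precompose with a "bounded-to-one surjective" modification, because we are free to re-attach extra copies of $X_C$. Concretely, I would show the following: for any infinite $X_A$, the graph $X_A * X_C$ is quasi-isometric to the graph obtained by attaching, at each vertex of $X_A$, any number between $1$ and $k$ of copies of $X_C$, with those copies recursively decorated in the same way. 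The tool is a "sliding" argument. Moving attaching points along infinite rays in $X_A$ changes the multiplicity at each vertex while keeping displacements bounded. This uses a perfect-matching/Hall-type argument on the locally finite graph, and it needs $X_A$ infinite, or $|C| > 2$ in the finite case.

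With this flexibility, I would choose a quasi-isometry $f$ that is bounded-to-one and coarsely surjective. I would then build the quasi-isometry of free products inductively along the Bass--Serre tree. The copy $X_A$ at the root maps to $X_B$ by $f$. The copies of $X_C$ attached at the vertices of $f^{-1}(y)$ are sent to copies attached at $y$; this is consistent because the multiplicities have been matched. Each copy of $X_C$ maps to its target copy by the identity. The process recurses through the next layer of $A$-copies. Uniform constants at every stage make the resulting map a quasi-isometry, since distances in a free product are sums of distances along the tree of spaces.

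I expect the main obstacle to be the multiplicity-adjustment step. This is the statement that $X_A * X_C$ is unchanged, up to quasi-isometry, when the number of $C$-copies at each vertex varies boundedly. It requires a careful infinite Hall-marriage or ray-sliding construction that works uniformly even for amalgamated amenable groups like $\mathbb{Z}$. The order-$2$ exceptions enter exactly here.
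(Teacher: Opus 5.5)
There are genuine gaps. First, the finite cases are mishandled. If $A$ and $B$ are finite but $C$ is infinite, $A*C$ need not be virtually free (e.g.\ $\mathbb{Z}/2*\mathbb{Z}^2$). Your main-case lemma is formulated for infinite $X_A$, so this case is covered nowhere. The paper (in its graph version for finite factors) handles it differently. It collapses each finite copy of $X_A$ to its base vertex, which turns $X_A*X_C$ into the $|A|$-fold free product $X_C*\cdots*X_C$. This is quasi-isometric to $X_C*X_C$ by Papasoglu--Whyte's Lemma 1.4 ($X*Y$ is bilipschitz to $X*(Y*Y)$), so $A*C$ and $B*C$ are both quasi-isometric to $C*C$. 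The case $C$ finite, $A$ infinite is also not ``the same pattern''. Collapsing the $C$-blobs gives the $|C|$-fold free product $X_A*\cdots*X_A$, not a decoration of $X_A$. One then needs $X_A*X_A\simeq X_B*X_B$, which is the infinite case again.

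Second, and more seriously, your main case rests entirely on the multiplicity lemma, which you leave unproved. The mechanism you suggest for it fails precisely when $X_A$ is amenable. Sliding attaching points along rays inside a copy of $X_A$, or any Hall matching inside $X_A$, is a bounded-displacement bijection from $V(X_A)$ onto the slots $\bigsqcup_x\{1,\dots,\mu(x)\}$. Whenever $\mu$ averages more than $1$ on F\o lner sets, Hall's condition fails. For $X_A=\mathbb{Z}$ and $\mu\equiv 2$, the $2n$ slots over an interval of length $n$ would have to be filled from an interval of length $n+2R$.

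Average multiplicity above $1$ is unavoidable in your scheme. A merely coarsely surjective $f$ may have empty fibres, i.e.\ multiplicity $0$, and for instance $x\mapsto 2x$ on $\mathbb{Z}$ is at bounded distance from no surjection. So to get a genuine bounded-to-one surjection $g$ you must first thicken $X_A$ by a factor $k$, after which Hall does apply. But the fibre sizes $|g^{-1}(y)|$ you must then realize over $X_B$ again average above $1$ in general. Granted the lemma, the thickening/Hall step and your level-by-level map are fine.

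The missing idea is that the extra copies of $X_C$ must be imported from deeper levels of the tree of spaces. Every copy of $X_C$ carries further copies of $X_A$. A Hilbert-hotel shift along a ray in one of those frees a copy of $X_C$, which can be moved up a bounded distance, and this is done recursively. The argument exploits non-amenability of the tree of spaces rather than any property of $X_A$. It is the kind of rearrangement behind Papasoglu--Whyte's Lemma 1.4, which their proof of the infinite case uses; the paper quotes that lemma and the infinite case rather than reproving them.
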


Papasoglu and Whyte prove this theorem by introducing a free product of graphs, proving the analogous statement and then applying it to Cayley graphs.

\begin{construction}
    Let $(X, x_0)$ and $(Y, y_0)$ be two graphs with base points $x_0$ and $y_0$. The \emph{free product} $(X, x_0) * (Y, y_0)$ of these graphs will consist of copies of $X$ and $Y$ with edges connecting these copies. We construct it recursively as follows.

    Let $\Gamma_0$ be the disjoint union of $X$ and $Y$ with an edge between $x_0$ and $y_0$. We call this edge the \emph{base edge} of $(X, x_0) * (Y, y_0)$. Given $\Gamma_n$, we construct $\Gamma_{n+1}$: For any vertex $v \in \Gamma_n$ not adjacent to another subgraph, add a subgraph isomorphic to $X$ or $Y$ and add an edge between $v$ and the base point of the new subgraph. For instance, if $v$ is a vertex of a copy of $X$, add a subgraph isomorphic to $Y$ and add an edge connecting $v$ to the base point $y_0$ of this new copy of $Y$. If $v$ is a vertex of a copy of $Y$, add a subgraph isomorphic to $X$ instead. We have $\Gamma_n$ as a subgraph of $\Gamma_{n+1}$. Now we define $(X, x_0) * (Y, y_0)$ to be the direct limit of the graphs $\Gamma_n$, i.e. $(X, x_0) * (Y, y_0) = \bigcup_n \Gamma_n$.
\end{construction}

\begin{figure}[htbp]
    
    \centering
    
        \begin{tikzpicture}

        \draw (-0.2,0)--(-1.85,0.85)--(-1.85,-0.85)--(-0.2,0);
        \draw[red,thick] (-0.2,0)--(0.2,0);
        \draw[red,thick] (-1.85,0.85)--(-2.15,1.15);
        \draw[red,thick] (-1.85,-0.85)--(-2.15,-1.15);
        
        \fill[black] (-0.2,0) circle (2pt);
        \fill[black] (-1.85,0.85) circle (2pt);
        \fill[black] (-1.85,-0.85) circle (2pt);
        
        \draw[blue] (0.2,0)--(1.8,0);
        \fill[blue] (0.2,0) circle(2pt);
        \fill[blue] (1.8,0) circle(2pt);

        \draw[blue] (-2.15,1.15)--(-2.85,1.85);
        \fill[blue] (-2.15,1.15) circle (2pt);
        \fill[blue] (-2.85,1.85) circle (2pt);
        
        \draw[blue] (-2.15,-1.15)--(-2.85,-1.85);
        \fill[blue] (-2.15,-1.15) circle (2pt);
        \fill[blue] (-2.85,-1.85) circle (2pt);

        \draw (2.2,0)--(3.85,0.85)--(3.85,-0.85)--(2.2,0);
        \draw[red,thick] (1.8,0)--(2.2,0);
        \draw[red,thick] (3.85,0.85)--(4.15,1.15);
        \draw[red,thick] (3.85,-0.85)--(4.15,-1.15);
        \fill[blue] (1.8,0) circle(2pt);

        \fill[black] (2.2,0) circle (2pt);
        \fill[black] (3.85,0.85) circle (2pt);
        \fill[black] (3.85,-0.85) circle (2pt);

        \draw[blue] (4.15,1.15)--(4.85,1.85);
        \fill[blue] (4.15,1.15) circle (2pt);
        \fill[blue] (4.85,1.85) circle (2pt);
        
        \draw[blue] (4.15,-1.15)--(4.85,-1.85);
        \fill[blue] (4.15,-1.15) circle (2pt);
        \fill[blue] (4.85,-1.85) circle (2pt);

        \draw (-3.15,2.15)--(-4.5,2.5)--(-3.5,3.5)--(-3.15,2.15);
        \draw[red,thick] (-2.85,1.85)--(-3.15,2.15);
        \fill[blue] (-2.85,1.85) circle (2pt);

        \fill[black] (-3.15,2.15) circle (2pt);
        \fill[black] (-4.5,2.5) circle (2pt);
        \fill[black] (-3.5,3.5) circle (2pt);

        \draw (-3.15,-2.15)--(-4.5,-2.5)--(-3.5,-3.5)--(-3.15,-2.15);
        \draw[red,thick] (-2.85,-1.85)--(-3.15,-2.15);
        \fill[blue] (-2.85,-1.85) circle (2pt);
        
        \fill[black] (-3.15,-2.15) circle (2pt);
        \fill[black] (-4.5,-2.5) circle (2pt);
        \fill[black] (-3.5,-3.5) circle (2pt);
        
        \end{tikzpicture}

    \caption{The subgraph $\Gamma_2$ of the free product of the cycle graphs $C_3$ (drawn in black) and $C_2$ (drawn in blue). The edges connecting different subgraphs are drawn in red.}
        
\end{figure}

If $X$ and $Y$ are vertex-transitive, e.g. Cayley or Cayley--Abels graphs, then all choices of base points $x_0$ and $y_0$ lead to isomorphic free products $(X, x_0) * (Y, y_0)$. Even if $X$ and $Y$ are not vertex-transitive, for a large class of graphs the bilipschitz class of $(X, x_0) * (Y, y_0)$ does not depend on the choice of base points. Hence, we will simply write $X * Y$.

\begin{defn}
    A graph $\Gamma$ is \emph{homogeneous} if for some $L \ge 1$ and for any two vertices $v_1, v_2 \in V(\Gamma)$ there is an $L$-bilipschitz map $f \colon \Gamma \to \Gamma$ with $f(v_1) = v_2$.
\end{defn}

\begin{prop}[\cite{papasoglu+whyte}*{Lemma 1.1}]\label{prop: bilipschitz class of free products of graphs}
    Let $X$ and $Y$ be homogeneous graphs. Let $\Gamma$ be a graph such that:
    \begin{itemize}
        \item $\Gamma$ contains a disjoint family of subgraphs $(X_i)_i$ and $(Y_i)_i$ whose union contains all vertices of $\Gamma$.

        \item Every edge of $\Gamma$ not in one of the subgraphs connects an $X_i$ to a $Y_j$. There is exactly one such edge at every vertex of $\Gamma$.

        \item There is an $L \ge 1$ such that for every $i$ there are $L$-bilipschitz equivalences $X_i \to X$ and $Y_i \to Y$.

        \item The quotient graph obtained from $\Gamma$ by collapsing each of the subgraphs to a point is a tree.
    \end{itemize}

    Then for any edge $e$ in $\Gamma$ connecting an $X_i$ to a $Y_j$ and any choice of base vertices $x_0$ of $X$ and $y_0$ of $Y$ there is an $M$-bilipschitz equivalence $\Gamma \to (X, x_0) * (Y, y_0)$ mapping $e$ to the base edge. The bilipschitz constant $M$ depends only on $L$ and the homogeneity constants of $X$ and $Y$.
\end{prop}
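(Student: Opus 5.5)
We build the bilipschitz equivalence inductively along the tree structure, one subgraph at a time, exactly mirroring the recursive construction of $(X, x_0) * (Y, y_0)$ as the union of the $\Gamma_n$. Let $T$ be the tree obtained from $\Gamma$ by collapsing each $X_i$ and $Y_j$, and let $T'$ be the corresponding tree for the free product. The given edge $e$ joins some $X_{i_0}$ to some $Y_{j_0}$; write $e = \{a, b\}$ with $a \in X_{i_0}$ and $b \in Y_{j_0}$. The map $f$ will send $e$ to the base edge and will induce an isomorphism of $T$ onto $T'$.

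\textbf{Base step.} Fix an $L$-bilipschitz equivalence $\phi \colon X_{i_0} \to X$. By homogeneity of $X$ (constant $L_X$), choose an $L_X$-bilipschitz self-map $\psi$ of $X$ with $\psi(\phi(a)) = x_0$. Then $\psi\circ\phi$ is an $LL_X$-bilipschitz equivalence $X_{i_0}\to X$ sending $a$ to $x_0$. Do the same for $Y_{j_0}$, sending $b$ to $y_0$, and map $e$ to the base edge.

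\textbf{Inductive step.} Suppose $f$ has been defined on a subtree's worth of pieces. Each vertex $v$ of an already-mapped piece carries exactly one connecting edge, leading to a new piece $Z$ (a copy of $X$ or of $Y$) at a vertex $w \in Z$. The image $f(v)$ likewise carries exactly one connecting edge, leading to a new copy $Z'$ at its base point. Define $f$ on $Z$ as an $LL_X$- or $LL_Y$-bilipschitz equivalence $Z \to Z'$ sending $w$ to the base point of $Z'$, built as in the base step, and map the connecting edge accordingly. Since both quotient graphs are trees and each vertex has exactly one connecting edge, this process is well defined and yields a bijection on vertices that is a tree isomorphism on the quotients. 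Taking the union over all steps gives $f$.

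\textbf{Bilipschitz estimate.} This is the main point. Set $K = L\max(L_X, L_Y)$. Take vertices $p, q$ of $\Gamma$. The tree structure shows that every path from $p$ to $q$ must traverse the same sequence of pieces $P_0, \dots, P_k$ and the same connecting edges $\{u_m, w_{m+1}\}$, because a connecting edge separates $\Gamma$. Hence a geodesic decomposes as
\[ d(p,q) = \sum_{m} d_{P_m}(\text{entry}_m, \text{exit}_m) + k, \]
where each term uses the intrinsic metric $d_{P_m}$. Note that a geodesic between two points of the same piece stays inside that piece, since leaving it would require crossing a separating edge twice. The same decomposition holds for $f(p), f(q)$, with corresponding pieces and corresponding entry and exit points. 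On each piece, $f$ distorts the intrinsic distance by at most a factor $K$, and connecting edges map to connecting edges. Therefore $f$ is $K$-bilipschitz, and $M = K$ depends only on $L$ and the homogeneity constants.

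The main thing to verify carefully is this geodesic decomposition: that geodesics between points of a single piece stay within it, and that entry and exit points correspond under $f$. Both follow from the separating property of connecting edges in the tree structure.
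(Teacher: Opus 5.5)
Your proposal is correct and takes essentially the same approach as the proof the paper relies on. The paper does not reprove this lemma; it cites Papasoglu--Whyte, whose argument it mirrors in its construction of $\varphi\colon \hat{X} \to \hat{Y}$: build the map piece by piece outward from $e$, using uniformly bilipschitz maps that send each attaching vertex to the base point, so that the quotient trees are matched. Your explicit decomposition of distances along the separating connecting edges is exactly the reason, left implicit there, why uniform constants on the pieces give a global bilipschitz constant $M = L\max(L_X, L_Y)$.
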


In particular, this proposition shows that the free product of homogeneous graphs is itself homogeneous.

\begin{rem}
    Let $X * Y$ be a free product of two homogeneous graphs. Contracting edges connecting different subgraphs yields a quasi-isometry to a graph $X + Y$. This graph is a special case of tree amalgamations. For a thorough introduction to tree amalgamations see Hamann \cite{hamann} or Mohar \cite{mohar}. Note that our notation is opposite to Hamann's. Some results for graphs of groups carry over. Hamann, Lehner, Miraftab and Rühmann \cite{hamann+lehner+miraftab+ruehmann}*{Theorem 1.1} have proven a version of Stallings' ends theorem for tree amalgamations. Hamann \cite{hamann}*{Theorem 1.3} has shown that an analogue of Papasoglu and Whyte's theorem holds for tree amalgamations.

    If $\Gamma_A$ is the Cayley graph of a group $A$ over a generating set $S$ and $\Gamma_B$ is the Cayley graph of a group $B$ over a generating set $T$, then the Cayley graph of $A * B$ over the generating set $S \cup T$ is in general not isomorphic to the free product of graphs $\Gamma_A * \Gamma_B$ but instead to the tree amalgamation $\Gamma_A + \Gamma_B$. As we're only interested in graphs up to quasi-isometry, we won't distinguish between them and say nonetheless that the free product of graphs $\Gamma_A * \Gamma_B$ is a Cayley graph of $A * B$.

    Both operations $X * Y$ and $X + Y$ on graphs are commutative. Furthermore, the operation $X + Y$ is associative. From \cref{thm: quasi-isometric factors implies quasi-isometric free products of graphs} it follows that the free products $X * (Y * Z)$ and $(X * Y) * Z$ of homogeneous graphs are quasi-isometric as long as all factors have at least two vertices.
\end{rem}

Following Papasoglu and Whyte's proof of \cref{thm: quasi-isometric factors implies quasi-isometric free products of groups} for infinite factors, they actually show a more general statement. 

\begin{thm}[\cite{papasoglu+whyte}*{Theorem 0.1}]\label{thm: quasi-isometric infinite factors implies quasi-isometric free products of graphs}
    Let $X, Y$ and $Z$ be infinite homogeneous graphs. If $X$ and $Y$ are quasi-isometric, then $X * Z$ and $Y * Z$ are quasi-isometric.
\end{thm}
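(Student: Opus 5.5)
The plan is to build the quasi-isometry $X * Z \to Y * Z$ copy by copy, following the tree structure of the free product. On each copy of $X$ we use a fixed coarse map to $Y$, and we then repair the combinatorics of how copies of $Z$ are attached.

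\textbf{Step 1: normalise the quasi-isometry.} Fix a quasi-isometry $f \colon X \to Y$. Both graphs are infinite, connected and of bounded degree; the uniform degree bound comes from homogeneity. Moving each point a bounded distance, I would replace $f$ by a map $f' \colon V(X) \to V(Y)$ with three properties:
\begin{itemize}
\item $f'$ is surjective;
\item $f'$ is at bounded distance from $f$;
\item every fibre has cardinality between $1$ and some $N$.
\end{itemize}
Surjectivity uses coarse density of the image. The upper bound on fibres uses the quasi-isometric lower bound together with bounded geometry.

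\textbf{Step 2: a generalised free product.} For a function $m \colon V(Y) \to \{1, \ldots, N\}$, define a graph $Y *_m Z$ by the same recursion as the free product, with one change: at each vertex $y$ of a copy of $Y$ we attach $m(y)$ copies of $Z$ instead of one. Define $Z *_{m'} Y$ similarly. Taking $m(y) = |f'^{-1}(y)|$, I would map $X * Z$ to $Y *_m Z$ as follows. Each copy of $X$ goes to a copy of $Y$ via $f'$. The $m(y)$ copies of $Z$ hanging at the points of $f'^{-1}(y)$ go bijectively, and isometrically, to the $m(y)$ copies of $Z$ hanging at $y$. We then recurse into each copy of $Z$.

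This map is bilipschitz up to an additive constant, with constants independent of depth. The reason is that distances in a free product are sums of distances inside the pieces along the geodesic in the tree, plus one for each connecting edge. Each piece is distorted by a uniformly bounded amount, and the uniformity is supplied by \cref{prop: bilipschitz class of free products of graphs}.

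\textbf{Step 3: absorb the multiplicities (main obstacle).} It remains to show that $Y *_m Z$ is quasi-isometric, in fact bilipschitz, to $Y * Z$ for any bounded $m \ge 1$. This is where infiniteness of $Y$ and $Z$ is essential. The idea is to find a bounded-displacement map $g \colon V(Y) \to V(Y)$ whose fibres have sizes exactly $m(y)$. Concretely, we want to partition $V(Y)$ into blocks of uniformly bounded diameter, with $m(y)$ points in the block assigned to $y$. If such a $g$ exists, we reroute the $m(y)$ copies of $Z$ attached at $y$ to distinct points of that block, so that every vertex carries exactly one copy of $Z$. This changes distances by at most a multiplicative constant.

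I would construct $g$ by a Hall marriage argument on the locally finite bipartite graph that joins $y$ (with demand $m(y)$) to all vertices within distance $R$. The Hall condition has to be checked for finite sets. Here I would use that in an infinite connected homogeneous graph, balls of radius $R$ around any finite set $F$ have size at least $|F| + cR$. Choosing $R$ large relative to $N$ should then make neighbourhoods large enough to meet the demands. If the linear growth estimate is too weak for large $N$, I would proceed differently: first reduce to $m \le 2$ by iterating the construction, and then treat each attached pair of copies of $Z$ by using a $2$-to-$1$ bounded-displacement map inside $Z$ itself, exploiting that $Z$ is infinite.

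By symmetry, the same absorption works with the roles of $Y$ and $Z$ exchanged. The multiplicities created at deeper levels can therefore be absorbed level by level with uniform constants. By \cref{prop: bilipschitz class of free products of graphs}, the result is bilipschitz to $Y * Z$. The Hall-type matching with uniform radius is the step I expect to require the most care.
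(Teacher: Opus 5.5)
The paper does not prove this statement; it quotes it from Papasoglu--Whyte. On its own terms, your Step 2 is fine, but Step 3 has a genuine gap, and Step 1 has the mirror-image one.

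\textbf{The gap.} A bounded-displacement map $g\colon V(Y)\to V(Y)$ with $|g^{-1}(y)|=m(y)$ forces $\sum_{y\in F}m(y)\le |N_R(F)|$ for every finite $F$, since $g^{-1}(F)\subseteq N_R(F)$. This is a multiplicative, non-amenability-type condition. Your additive bound $|N_R(F)|\ge |F|+cR$ cannot supply it.

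\begin{itemize}
\item Take $X=Y=Z$ the line and $f(n)=\lfloor n/2\rfloor$. Then $m\equiv 2$, and $F=[-M,M]$ would need $4M+2\le 2M+2R+1$, which is false for $M\ge R$.
\item Your fallback fails for the same reason: an amenable $Z$ has no bounded-displacement $2$-to-$1$ self-map. Even with one, two copies of the line wedged at a point are not bilipschitz to one copy, so they cannot be merged locally.
\item Step 1 fails symmetrically. Coarse density does not give a surjection at bounded distance from $f$: for $f(n)=2n$ on the line, any $f'$ within $D$ of $f$ hits at most $M+D+1$ points of $[-M,M]$.
\end{itemize}

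No choice of $f$ rescues the scheme in general. If Steps 1 and 3 both held, matching each fibre $(f')^{-1}(y)$ bijectively with $g^{-1}(y)$ would give a bijection $V(X)\to V(Y)$ at bounded distance from $f$, hence a bilipschitz equivalence. But there are quasi-isometric Cayley graphs that are not bilipschitz equivalent (Dymarz's lamplighter examples). So your argument only covers the case where $f$ is already close to a bilipschitz map, e.g.\ when $Y$ is non-amenable.

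\textbf{The missing idea.} The surplus must be absorbed by the tree directions of the free product, which are non-amenable, not inside $Y$ or $Z$. One way to do this:
\begin{itemize}
\item In each copy of $Y$, with base point $w_0$, fix a geodesic ray $w_0,w_1,w_2,\dots$.
\item At a non-base vertex $y$, fill its single target slot with the branch that was sent there, if any. Otherwise fill it with one native copy of $Z$.
\item Send the remaining natives at $y$ (at most $N$) to $w_1,\dots,w_N$ in the copy of $Y$ hanging at a neighbour $z_1$ of the base point of the copy of $Z$ that took the slot.
\item Send the natives at each base point $w_0$ to $w_{N+1},\dots,w_{2N}$ of its own ray.
\end{itemize}
Each ray vertex receives at most one branch, and every branch moves at most once, by a bounded amount. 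Hence the resulting bijection and its inverse are Lipschitz on edges, giving $Y*_mZ\cong Y*Z$ for $1\le m\le N$. Infiniteness is used only to have the rays.

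To avoid $m(y)=0$, replace Step 1 by projecting onto a net $M\subseteq X$ on which $f$ is injective, as in \cref{constr: quasi-isometry projection to net}. Then both $X*Z$ and $Y*Z$ become such multiplicity trees of spaces, over the bilipschitz equivalent nets $M$ and $f(M)$.
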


We will not reproduce the proof of their theorem here. However, we will need a lemma used in its proof, so we state it here for the reader's convenience.

\begin{prop}[\cite{papasoglu+whyte}*{Lemma 1.4}]\label{prop: X * Y is bilipschitz equivalent to X * (Y * Y)}
    Let $X$ and $Y$ be infinite homogeneous graphs. Then the free products $X * Y$ and $X * (Y * Y)$ are bilipschitz equivalent.
\end{prop}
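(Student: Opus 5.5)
The plan is to build an explicit bilipschitz equivalence by recognizing $X * (Y * Y)$ as a graph of the type described in \cref{prop: bilipschitz class of free products of graphs}, with pieces $X_i$ copies of $X$ and pieces $Y_j$ copies of $Y * Y$. Equivalently, it suffices to re-group the pieces of $X * Y$ into copies of $X$ and subgraphs that are uniformly bilipschitz to $Y * Y$, so that the collapsed quotient is still a tree. The difficulty is that $Y * Y$ is infinite and must be assembled from infinitely many $Y$-copies. In $X * Y$, however, each $Y$-copy is joined only to $X$-copies, never to other $Y$-copies. So I would not compare the two graphs directly. Instead I would show that both are bilipschitz to a common model, reducing everything to the statement that a copy of $Y$ with uniformly attached data can be ``thickened'' into $Y * Y$.

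The key steps, in order, are as follows.

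First, I would use homogeneity and infiniteness of $Y$ to find a partition of $V(Y)$ into two sets $P$ and $Q$. Both should be $R$-dense in $Y$ for some fixed $R$, and each vertex of $Q$ should be paired with a vertex of $P$ at bounded distance. Concretely, I would take a maximal $2$-separated net $P$, let $Q$ be the complement, and use Hall's marriage theorem on the locally finite bipartite graph (every ball in an infinite connected homogeneous graph contains many vertices) to obtain a bounded-distance bijection between $Q$ and a subset of vertices. The aim is an injection that lets each vertex ``borrow'' a neighbour.

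Second, I would use this to show that $Y * Y$ itself is bilipschitz to $Y$ with, at each vertex of a $2R$-dense subset, one extra edge to a fresh copy of $Y * Y$. Applied to $X * (Y*Y)$, this means every $X$-edge attaches to one of these designated vertices, while the other vertices of the $Y*Y$-piece carry the internal $Y$-to-$Y$ edges. Comparing with $X * Y$, where every vertex of a $Y$-copy carries one $X$-edge, I would build the bijection level by level down the Bass--Serre-like tree of pieces. Each $Y$-copy in $X * Y$ goes to the ``core'' $Y$ of a $Y*Y$-piece. The $X$-edges at vertices in $Q$ are rerouted, via the Hall matching, to neighbouring vertices, and the excess $X$-branches are recursively replaced by branches $Y \to X \to \dots$. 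Here \cref{prop: bilipschitz class of free products of graphs} guarantees that every time the local structure matches the axioms, the resulting graph is bilipschitz to the desired free product, independent of base points.

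The main obstacle is the bookkeeping in this second step. One must ensure that the rerouting moves edges only a bounded distance, that the identifications are consistent across infinitely many levels of the tree, and that the bilipschitz constant does not grow with the depth. I would handle this by doing the construction once in a single model piece, namely one copy of $Y$ and its attached edges, with constant depending only on $R$ and the homogeneity constant $L$. I would then propagate it equivariantly by the recursive definition of the free product, so that the constant stays uniform. The infiniteness hypothesis enters exactly in the Hall matching step, where we need arbitrarily large balls in $Y$ to absorb the extra attachments.
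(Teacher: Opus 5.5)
The paper does not prove this statement (it is quoted from Papasoglu--Whyte), so your sketch has to stand on its own, and at its core there is a genuine gap.

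\textbf{Where the sketch fails.} The difficulty is a count of attachments. View $Y*Y$ as a tree of copies of $Y$. Then every vertex of every copy of $Y$ in $X*(Y*Y)$ carries two external edges, one to a copy of $X$ and one to another copy of $Y$. In $X*Y$ it carries only one.

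You propose to handle the surplus with a bounded-distance Hall matching inside a single copy of $Y$, with infiniteness supplying large balls that ``absorb the extra attachments''. That cannot work. A bounded-displacement rearrangement of attachment points inside one copy of $Y$ changes their number on a F\o lner set only by a boundary term. So for amenable $Y$ (e.g.\ the standard Cayley graph of $\mathbb{Z}^2$) no positive density of extra attachments can be absorbed. Your pairing already fails there if it is meant to be injective: $P=\{(a,b) : a+2b\equiv 0 \pmod{5}\}$ is a maximal $2$-separated set of density $1/5$, and its complement admits no bounded-distance injection into it. The doubling Hall condition you would need is non-amenability, which infiniteness does not provide.

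Your second step, that $Y*Y$ is bilipschitz to $Y$ with copies of $Y*Y$ hung on a dense subset, is a redistribution statement of the same depth as the proposition and is left unproved. Likewise, ``replacing excess $X$-branches by branches $Y\to X\to\cdots$'' is exactly where the content would lie, but you never specify a bounded-displacement operation that realizes it.

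\textbf{The missing idea.} The surplus must be imported from the next level of the tree of spaces, and infiniteness is used to repair the resulting hole by a Hilbert-hotel shift along a ray.
\begin{enumerate}
\item Orient the tree of pieces of $X*Y$ away from a root copy $Y_0$ of $Y$. Let $y$ be a vertex of a copy of $Y$ from which a copy $X_y$ of $X$ hangs, with base $v_0$. Choose a ray $v_0,v_1,v_2,\dots$ in $X_y$ (K\"onig's lemma).
\item Reattach the copy of $Y$ hanging at $v_1$ directly to $y$; its base is at distance $3$ from $y$. For $k\ge 1$, reattach the copy hanging at $v_{k+1}$ to $v_k$. Now $y$ has one edge to a copy of $X$ and one to a copy of $Y$, and every vertex of $X_y$ again has exactly one external edge.
\item After doing this at all such $y$, only the base vertex of each copy of $Y$ other than $Y_0$ lacks one of its two attachments. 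Supply it by shifting the appropriate hanging branches (the $X$-branches, or the newly attached $Y$-branches) one step along a ray in that copy of $Y$. This is where $Y$ infinite is used.
\item Each new edge joins vertices at distance at most $4$ in $X*Y$, and each removed edge joins vertices at distance at most $4$ in the new graph. The pieces themselves are untouched, so the identity on vertices is bilipschitz.
\item The quotient by pieces is still a tree, since whole branches are only re-parented within the same piece or to the grandparent piece.
\item Apply \cref{prop: bilipschitz class of free products of graphs} twice. First apply it to the clusters of copies of $Y$ joined by $Y$--$Y$ edges; each is uniformly bilipschitz to the homogeneous graph $Y*Y$. Then apply it to the copies of $X$ together with these clusters, which gives $X*(Y*Y)$.
\end{enumerate}
Your plan to do the construction once and propagate it with uniform constants is fine once this local move is in place.
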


Papasoglu and Whyte remark that \cref{thm: quasi-isometric infinite factors implies quasi-isometric free products of graphs} can easily be extended to certain finite factors using similar arguments to their proof of \cref{thm: quasi-isometric factors implies quasi-isometric free products of groups}, see \cite{papasoglu+whyte}*{Theorem 0.1}, but leave the precise statement as an exercise for the reader. As we will need the theorem for finite factors, we give a full proof.

\begin{thm}\label{thm: quasi-isometric factors implies quasi-isometric free products of graphs}
    Let $X, Y$ and $Z$ be homogeneous graphs. If $X$ and $Y$ are quasi-isometric, then $X * Z$ and $Y * Z$ are quasi-isometric, except in the following cases:
    \begin{itemize}
        \item $X$ or $Y$ has only a single vertex.

        \item $Z$ and one of $X$ and $Y$ have exactly two vertices.
    \end{itemize}
\end{thm}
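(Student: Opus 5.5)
We split into cases according to whether $X$ and $Y$ are finite or infinite, and whether $Z$ is finite or infinite. Since $X$ and $Y$ are quasi-isometric, they are either both finite or both infinite. If both are infinite and $Z$ is infinite, the claim is exactly \cref{thm: quasi-isometric infinite factors implies quasi-isometric free products of graphs}. The remaining work is to handle finite factors. In each such case we identify $X * Z$ up to quasi-isometry with a known model space and check that the model depends only on the quasi-isometry class of $X$.

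\textbf{Case 1: $X$ and $Y$ finite, with at least two vertices each.} Suppose $X$ has $m \ge 2$ vertices and $Z$ has $k$ vertices, with $(m,k) \ne (2,2)$ and $k \ge 2$. Collapsing every copy of $X$ to a point is a quasi-isometry, since the copies have uniformly bounded diameter. If $Z$ is also finite, collapsing copies of $Z$ as well shows that $X * Z$ is quasi-isometric to the biregular tree $T_{m,k}$. For $(m,k)\neq(2,2)$ this tree has all vertex degrees at least $2$ and some degree at least $3$, so it is quasi-isometric to the $3$-regular tree. The same holds for $Y * Z$, so the two are quasi-isometric.

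If $Z$ is infinite, collapsing the copies of $X$ gives a graph in which copies of $Z$ are glued along a tree pattern: every vertex of a $Z$-copy meets one collapsed $X$-vertex of degree $m$. We compare this with $Z * Z$, or with $Z * (Z * Z)$.
\begin{itemize}
\item For $m=2$, the graph $X * Z$ is bilipschitz to a graph in which pairs of $Z$-copies are joined by a path of length $3$. By \cref{prop: bilipschitz class of free products of graphs}, it is therefore bilipschitz to $Z * Z$.
\item For $m \ge 3$, a degree-$m$ star vertex can be split into a bounded-size tree. Regrouping then realises $X * Z$ as bilipschitz to a free product in which each $Z$-copy is attached to $m-1$ further $Z$-copies per vertex. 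Using \cref{prop: X * Y is bilipschitz equivalent to X * (Y * Y)} repeatedly, together with the associativity remark, this should reduce to $Z * Z$ up to quasi-isometry.
\end{itemize}
Hence $X * Z$ is quasi-isometric to $Z * Z$ independently of $m \ge 2$, and likewise for $Y * Z$. The case where $Z$ has a single vertex is handled directly: $X * Z$ is then essentially $X$ with a pendant tree structure, and the same collapsing argument applies.

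\textbf{Case 2: $X$ and $Y$ infinite, $Z$ finite.} Let $Z$ have $k$ vertices. If $k=1$, then $X * Z$ is $X$ with a leaf attached at every vertex, so it is quasi-isometric to $X$. The same holds for $Y$, and we are done. If $k \ge 2$, collapsing the $Z$-copies gives a graph in which copies of $X$ are attached along a tree of valence $k$ at the collapsed vertices. As in Case 1, we show this is quasi-isometric to $X * X$, using \cref{prop: X * Y is bilipschitz equivalent to X * (Y * Y)} to absorb the extra branching. We then compare $X * X$ with $Y * Y$. This comparison should follow from \cref{thm: quasi-isometric infinite factors implies quasi-isometric free products of graphs} applied twice: $X * X \simeq Y * X \simeq Y * Y$.

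\textbf{Main obstacle.} The crux is the bilipschitz bookkeeping that removes dependence on the number $m$ (or $k$) of vertices of the finite factor. It requires identifying the graph obtained from a star-like gluing of infinite homogeneous copies with a genuine free product, so that \cref{prop: bilipschitz class of free products of graphs} applies. This needs a careful choice of matching between attaching vertices, so that each vertex gets exactly one connecting edge. The small cases $m=2$ or $k=2$, where no branching occurs, need separate treatment. They are exactly where the excluded case of two $2$-vertex factors arises, since there $X * Z$ is a line and not a tree of higher valence.
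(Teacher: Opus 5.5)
Your proposal is correct and follows essentially the same route as the paper: split by finiteness, reduce the all-finite case to the biregular tree $T_{m,k}$, collapse copies of the finite factor so that $X * Z$ becomes an iterated product of copies of the infinite factor, identify this with $Z * Z$ or $X * X$ via \cref{prop: X * Y is bilipschitz equivalent to X * (Y * Y)}, and conclude with $X * X \simeq Y * X \simeq Y * Y$ using \cref{thm: quasi-isometric infinite factors implies quasi-isometric free products of graphs} twice. Your $m \ge 3$ step is only sketched, as it is in the paper; when you write it out, two things need care: contracting the connecting paths gives a quasi-isometry rather than a bilipschitz equivalence (which is enough here), and the associativity you need for infinite factors should be derived from \cref{thm: quasi-isometric infinite factors implies quasi-isometric free products of graphs}, because the paper's associativity remark is itself deduced from the statement you are proving.
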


\begin{proof}
    Note that any finite graph is homogeneous, as any permutation of the vertices is a bilipschitz equivalence. If all graphs are infinite, then the statement follows immediately from \cref{thm: quasi-isometric infinite factors implies quasi-isometric free products of graphs}.

    First, suppose that $X$ (hence also $Y$) and $Z$ are finite with $|X| = \ell$, $|Y| = m$ and $|Z| = n$. In the free product $X * Z$ project every vertex in each copy of $X$ and $Z$ onto their respective base vertex. This yields a quasi-isometry from $X * Z$ to the $(\ell, n)$-biregular tree $T_{\ell, n}$. Similarly, $Y * Z$ is quasi-isometric to $T_{m, n}$. The statement follows from the fact that for finite $p$ and $q$ all biregular trees $T_{p, q}$ are quasi-isometric to the $3$-regular tree $T_3$ with the exception of $T_{1, q}$ and $T_{p, 1}$, which are finite, and $T_{2, 2}$, which is isometric to $T_2$.

    Next, suppose that $X$ and $Y$ are finite with $|X| = \ell$ and $|Y| = m$ and that $Z$ is infinite. By projecting every vertex in the free product $X * Z$ in each copy of $X$ onto their respective base vertex, we get a quasi-isometry to the $\ell$-fold free product $Z * \cdots * Z$. If $\ell \neq 1$, this is quasi-isometric to $Z * Z$ by \cref{prop: X * Y is bilipschitz equivalent to X * (Y * Y)}. Similarly, if $m \neq 1$, then $Y * Z$ is also quasi-isometric to $Z * Z$.

    Finally, suppose that $X$ and $Y$ are infinite and that $Z$ is finite with $|Z| = n$. If $n = 1$, then $X * Z$ is quasi-isometric to $X$ and $Y * Z$ is quasi-isometric to $Y$. If $n \neq 1$, then as before $X * Z$ is quasi-isometric to $X * X$ and $Y * Z$ is quasi-isometric to $Y * Y$. Hence, they are quasi-isometric by assumption or by \cref{thm: quasi-isometric infinite factors implies quasi-isometric free products of graphs}.
\end{proof}

Similarly, \cref{prop: X * Y is bilipschitz equivalent to X * (Y * Y)} also extends to certain finite factors if we consider quasi-isometries instead of bilipschitz equivalences.

\begin{prop}\label{prop: X * Y is quasi-isometric to X * (Y * Y)}
    Let $X$ and $Y$ be homogeneous graphs. Then the free products $X * Y$ and $X * (Y * Y)$ are quasi-isometric, except in the following cases:
    \begin{itemize}
        \item $X$ or $Y$ has only a single vertex.

        \item $X$ and $Y$ have exactly two vertices.
    \end{itemize}
\end{prop}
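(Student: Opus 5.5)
I will follow the case split in the proof of \cref{thm: quasi-isometric factors implies quasi-isometric free products of graphs}, splitting according to whether $X$ and $Y$ are finite or infinite. Throughout, I will use two observations from that proof.

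The first is a collapsing principle. In a free product where one factor $F$ is finite with $|F| = k \ge 2$, projecting each copy of $F$ onto its base vertex is a quasi-isometry. The result is the $k$-fold free product of the other factor with itself, or a biregular tree if both factors are finite. The second is that $Y * Y$ is homogeneous by \cref{prop: bilipschitz class of free products of graphs}, so all free products below are well defined up to bilipschitz equivalence. If both $X$ and $Y$ are infinite, the statement is exactly \cref{prop: X * Y is bilipschitz equivalent to X * (Y * Y)}.

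\emph{Both finite}, with $|X| = \ell \ge 2$, $|Y| = m \ge 2$ and not $\ell = m = 2$. Then $X * Y$ is quasi-isometric to $T_{\ell, m}$, which is quasi-isometric to $T_3$ because $(\ell, m) \neq (2,2)$. The graph $Y * Y$ is quasi-isometric to $T_{m,m}$. If $m \ge 3$, I will check via \cref{prop: bilipschitz class of free products of graphs} and the collapsing principle that $X * (Y * Y)$ is quasi-isometric to $X * T_3$. Contracting the copies of $X$ then gives a tree of bounded valence with all valences at least $3$ (or a $T_{2,\cdot}$-type subdivision), which is quasi-isometric to $T_3$. If $m = 2$, then $\ell \ge 3$, and $Y * Y = T_{2,2}$ is a bi-infinite line. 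Here $X * \mathbb{Z}$ with $|X| \ge 3$ is a tree-like graph quasi-isometric to $T_3$, namely the Bass--Serre-type tree of lines joined by finite pieces. To make this rigorous, I will replace each copy of $X$ by a star and each copy of the line by the line itself. The result is a tree with bounded valence, no dead ends, and branch points spaced boundedly, hence quasi-isometric to $T_3$.

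\emph{$X$ infinite, $Y$ finite}, with $|Y| = m \ge 2$. The collapsing principle gives that $X * Y$ is quasi-isometric to $X * X$. The factor $Y * Y$ is quasi-isometric to $T_{m,m}$. The case $m = 2$ needs separate care, since $T_{2,2}$ is a line; for this I use $X * \mathbb{Z}$. For $m \ge 3$, I will show that $X * T$, with $T$ a bounded-degree tree without leaves, is quasi-isometric to $X * X$. Collapsing the tree gives infinitely many copies of $X$ glued at bounded distance in a tree pattern. This should be comparable to $X * X * \cdots$, which by \cref{prop: X * Y is bilipschitz equivalent to X * (Y * Y)} and the previous remark is $X * X$ up to quasi-isometry. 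Cleanest would be to note that $X * (Y*Y)$ collapses first onto $X * Y * Y$-type free products. Associativity up to quasi-isometry (from the remark) then gives $(X * Y) * Y$. Collapsing the finite $Y$ yields $(X*Y)*(X*Y)$, which is quasi-isometric to $(X*X)*(X*X)$ and hence to $X * X$ by \cref{prop: X * Y is bilipschitz equivalent to X * (Y * Y)} applied repeatedly. This associativity route is the one I would try first, since it avoids trees entirely.

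\emph{$X$ finite, $Y$ infinite}, with $|X| = \ell \ge 2$. The graph $X * Y$ collapses to $Y * Y$. The graph $X * (Y*Y)$ collapses to the $\ell$-fold product of $Y * Y$, which is quasi-isometric to $(Y*Y)*(Y*Y)$ and hence to $Y*Y$ by \cref{prop: X * Y is bilipschitz equivalent to X * (Y * Y)}.

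The main obstacle is the bookkeeping in the finite cases. One has to make sure the collapsing maps are genuine quasi-isometries: each is distance non-increasing and coarsely surjective, and the lower bound comes from the tree structure of the quotient. One also has to handle the line case $m = 2$ honestly, which is exactly where the exception $|X| = |Y| = 2$ arises.
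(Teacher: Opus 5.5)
Your proposal is correct and follows essentially the same route as the paper: a case split by finiteness, collapsing finite factors, \cref{prop: X * Y is bilipschitz equivalent to X * (Y * Y)}, and associativity up to quasi-isometry from the remark. In the case $X$ infinite, $Y$ finite this is exactly the paper's chain through $(X*Y)*Y$ to $X*X$, and since associativity only needs every factor to have at least two vertices, that chain already covers $|Y|=2$, so your separate, unargued treatment of that subcase via $X*T_2$ is unnecessary.
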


\begin{proof}
    If all graphs are infinite, then the statement follows from \cref{prop: X * Y is bilipschitz equivalent to X * (Y * Y)}. Recall that under the assumptions on the number of vertices, the free products of graphs $X * (Y * Y)$ and $(X * Y) * Y$ are quasi-isometric. The other cases follow similarly to the proof of \cref{thm: quasi-isometric factors implies quasi-isometric free products of graphs}: If $X$ and $Y$ are both finite, then both $X * (Y * Y)$ and $X * Y$ are quasi-isometric to $T_3$. If $X$ is finite and $Y$ is infinite, then both $X * (Y * Y)$ and $X * Y$ are quasi-isometric to $Y * Y$. If $X$ is infinite and $Y$ is finite, then both $(X * Y) * Y$ and $X * Y$ are quasi-isometric to $X * X$.
\end{proof}

\section{Generalization to tdlc groups}\label{sec: generalization to tdlc groups}

In this section, we will prove \cref{thm: Papasoglu-Whyte for tdlc groups} and thus generalize the main theorem of Papasoglu--Whyte to tdlc groups. We begin with the implication that quasi-isometric vertex groups give rise to quasi-isometric fundamental groups of the corresponding graphs of tdlc groups. We begin by treating the cases of a single amalgam and a single HNN extension. The essence of the following arguments is to quasi-isometrically deform Cayley--Abels graphs so that \cref{thm: quasi-isometric factors implies quasi-isometric free products of graphs} or a suitable variant of \cref{prop: bilipschitz class of free products of graphs} becomes applicable. Recall that for a compactly generated tdlc group $G$ with a compact open subgroup $U$, we denote by $\Gamma_{G, U}$ a Cayley--Abels graph of $G$ with vertex set $G/U$, see \cref{constr: definition of a Cayley--Abels graph}.

\begin{prop}\label{prop: quasi-isometric factors implies quasi-isometric amalgams}
    Let $A_1, A_2, B_1$ and $B_2$ be compactly generated tdlc groups. For $i = 1, 2$ let $C_i$ be a common compact open proper subgroup of $A_i$ and $B_i$. If $A_1$ and $A_2$ are quasi-isometric and $B_1$ and $B_2$ are quasi-isometric, then the amalgams $A_1 *_{C_1} B_1$ and $A_2 *_{C_2} B_2$ are quasi-isometric, unless one $C_i$ has index $2$ in both $A_i$ and $B_i$. Furthermore, without any assumption on the indices, $A *_C B$ is quasi-isometric to the free product of graphs $\Gamma_{A, C} * \Gamma_{B, C}$.
\end{prop}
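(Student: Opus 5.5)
We first establish the second assertion: $G = A *_C B$ is quasi-isometric to $\Gamma_{A,C} * \Gamma_{B,C}$. Then the first assertion follows by applying \cref{thm: quasi-isometric factors implies quasi-isometric free products of graphs} twice.

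For the second assertion, let $T$ be the Bass--Serre tree of $G = A *_C B$. Its vertices are the cosets $gA$ and $gB$, and its edges are the cosets $gC$. We build a graph $\Gamma$ as follows.
\begin{itemize}
    \item For each coset $gA$, take a copy $g\Gamma_{A,C}$ of a Cayley--Abels graph of $A$ with vertex set $gA/C$.
    \item For each coset $gB$, take a copy $g\Gamma_{B,C}$ with vertex set $gB/C$.
    \item For every $gC$, join the vertex $gC$ of the copy $g\Gamma_{A,C}$ to the vertex $gC$ of the copy $g\Gamma_{B,C}$.
\end{itemize}
Here $g\Gamma_{A,C}$ depends only on $gA$, since left multiplication by $A$ acts on $\Gamma_{A,C}$ by automorphisms. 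We then verify the hypotheses of \cref{prop: bilipschitz class of free products of graphs}:
\begin{itemize}
    \item The copies of $\Gamma_{A,C}$ and $\Gamma_{B,C}$ are disjoint and cover all vertices.
    \item Each vertex carries exactly one connecting edge, because every vertex $gC$ of $g\Gamma_{A,C}$ is an edge of $T$ with a unique $B$-endpoint.
    \item All copies are isometric to $\Gamma_{A,C}$ or $\Gamma_{B,C}$, so $L=1$.
    \item Collapsing the copies yields $T$, which is a tree.
\end{itemize}
The graphs $\Gamma_{A,C}$ and $\Gamma_{B,C}$ are vertex-transitive, hence homogeneous. Therefore $\Gamma$ is bilipschitz equivalent to $\Gamma_{A,C} * \Gamma_{B,C}$.

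It remains to show that $\Gamma$ is quasi-isometric to $G$. The group $G$ acts on $\Gamma$ by left multiplication. This action is transitive on the copies of $\Gamma_{A,C}$ and hence on their vertices, which are indexed by $G/C$. Vertex stabilizers are the conjugates of $C$, so they are compact open. The graph $\Gamma$ is locally finite, since its degrees are at most the degrees in $\Gamma_{A,C}$ or $\Gamma_{B,C}$ plus one. It is connected because $T$ is connected and each copy is connected. The vertices of $\Gamma$ are in bijection with two copies of $G/C$, one for the $A$-pieces and one for the $B$-pieces. Identifying each $A$-vertex $gC$ with its $B$-partner contracts one edge per vertex, which is a quasi-isometry. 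The result is a locally finite connected graph on which $G$ acts vertex-transitively with compact open stabilizers, i.e. a Cayley--Abels graph of $G$. Since any two Cayley--Abels graphs of $G$ are quasi-isometric, $G$ is quasi-isometric to $\Gamma$. I expect this identification to be the main point needing care. One must check that the contracted graph is genuinely a Cayley--Abels graph: its vertex set is $G/C$, and $G$ acts by automorphisms with stabilizers conjugates of $C$. Alternatively, one can appeal directly to the Milnor--Schwarz-type fact that a proper cocompact action on a locally finite connected graph gives a quasi-isometry.

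For the first assertion, a quasi-isometry $A_1 \to A_2$ gives a quasi-isometry $\Gamma_{A_1,C_1} \to \Gamma_{A_2,C_2}$, since Cayley--Abels graphs are quasi-isometric to the group. The same holds for $B_1 \to B_2$. We then obtain the chain
\[
\Gamma_{A_1,C_1} * \Gamma_{B_1,C_1} \;\sim\; \Gamma_{A_2,C_2} * \Gamma_{B_1,C_1} \;\sim\; \Gamma_{A_2,C_2} * \Gamma_{B_2,C_2},
\]
where each step applies \cref{thm: quasi-isometric factors implies quasi-isometric free products of graphs}. Now we check the exceptional cases of that theorem.
\begin{itemize}
    \item Since $C_i$ is proper in $A_i$ and $B_i$, each graph has at least two vertices, so the single-vertex exception never occurs.
    \item The two-vertex exception requires $Z$ and one of the swapped factors to have two vertices. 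If the $A$-graphs are finite and quasi-isometric, then the $A$-graphs and the $B$-graphs are finite with at least two vertices.
    \item When all four are finite, every free product in the chain is quasi-isometric to $T_3$ or to $T_2$. The exception $T_{2,2}$ arises exactly when some $C_i$ has index $2$ in both $A_i$ and $B_i$.
\end{itemize}
This case analysis is routine but should be done carefully. It matches the stated exception, possibly after reordering the chain to swap the factor whose index is not two first.
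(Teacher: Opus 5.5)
Your proposal is correct and takes essentially the paper's route. You identify $\Gamma_{A,C} * \Gamma_{B,C}$, up to quasi-isometry, with a Cayley--Abels graph of $A *_C B$, and then swap factors using \cref{thm: quasi-isometric factors implies quasi-isometric free products of graphs}; the paper just invokes the normal form theorem for the first step, whereas you build the tree of spaces over the Bass--Serre tree, apply \cref{prop: bilipschitz class of free products of graphs}, and contract the connecting matching to get an honest vertex-transitive graph on $G/C$.

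Your write-up is more careful than the paper's on two points it glosses over. First, the free product itself has two $G$-orbits of vertices, so it is only quasi-isometric to a Cayley--Abels graph. Second, the order of the two swaps sometimes has to be chosen to avoid the two-vertex exception (e.g.\ when $[B_1:C_1]=[A_2:C_2]=2$); alternatively, the all-compact case can be compared directly via biregular trees.
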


\begin{proof}
    By the normal form theorem for amalgams, the free product $\Gamma_{A_i,C_i} * \Gamma_{B_i,C_i}$ is a Cayley--Abels graph of $A_i *_{C_i} B_i$. Since Cayley--Abels graphs are homogeneous, under the assumption on the indices the statement follows from \cref{thm: quasi-isometric factors implies quasi-isometric free products of graphs}.
\end{proof}

For HNN extensions $A *_C$ more preparation is needed, since there are two associated compact open subgroups $C$ and $C'$ to take care of. In the discrete setting, $C$ and $C'$ are isomorphic finite subgroups. Papasoglu and Whyte use Hall's marriage theorem to obtain a common left transversal $N$ of $C$ and $C'$ in $A$ and in the tree of spaces for $A *_C$ project vertices in copies of $A$ onto $N$. However, if $C$ and $C'$ are infinite, the indices of $C \cap C'$ in $C$ and $C'$ can be different and a common left transversal need not exist. Nevertheless, the construction of a suitable net in $A$ remains a crucial step in the argument.

\begin{defn}
    Let $\Gamma$ be a graph and let $R, S > 0$. An $(R, S)$-\emph{net} in $\Gamma$ is a set $N$ of vertices of $\Gamma$ satisfying the following:
    \begin{itemize}
        \item For distinct vertices $m, n \in N$ we have $d(m, n) \ge R$.

        \item For every vertex $v \in \Gamma$ we have $d(v, N) \le S$.
    \end{itemize}

    We give $N$ a graph structure by joining vertices $m, n \in N$ by an edge whenever $d(m, n) \le 2S$.
\end{defn}

For any locally finite graph $\Gamma$ and any $R > 0$ there exist $(R, R)$-nets. Moreover, if $N$ is a net in $\Gamma$, then the inclusion $N \xhookrightarrow{} \Gamma$ is a quasi-isometry. The metric induced on $N$ as a subset of $\Gamma$ is bilipschitz equivalent to the graph metric on $N$.

\begin{prop}\label{prop: quasi-isometric factors implies quasi-isometric HNN extensions}
    Let $A$ and $B$ be compactly generated tdlc groups. Let $C$ and $C'$ be isomorphic compact open subgroups of $A$, and let $D$ and $D'$ be isomorphic compact open subgroups of $B$. If $A$ and $B$ are quasi-isometric, then the HNN extensions $G = A *_C$ and $H = B *_D$ are quasi-isometric, unless $A = C = C'$ or $B = D = D'$. 
    
    Assume $[A : C] \ge [A : C']$. Then, under no assumption on properness of the subgroups, $A *_C$ is quasi-isometric to the free product of graphs $\Gamma_{A, C} * T_2$.
\end{prop}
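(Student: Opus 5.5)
Everything reduces to the second claim: $A *_C$ is quasi-isometric to $\Gamma_{A,C} * T_2$ whenever $[A:C] \ge [A:C']$. Granting it, the first claim follows from \cref{thm: quasi-isometric factors implies quasi-isometric free products of graphs} with $Z = T_2$. First, $\Gamma_{A,C}$ is quasi-isometric to $A$, hence to $B$, hence to $\Gamma_{B,D}$. Second, after possibly swapping $C$ and $C'$ (replace the stable letter $t$ by $t^{-1}$), we may assume the index inequality. The excluded cases of that theorem are where $\Gamma_{A,C}$ is a single vertex, i.e.\ $A=C$, which forces $C=C'$ by the index assumption; or where $\Gamma_{A,C}$ has two vertices and $Z=T_2$ has two vertices. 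I expect the last case to be harmless. There $\Gamma_{A,C}*T_2$ is quasi-isometric to $T_{2,2}\cong T_2$, which is a line. By the same reasoning $B$ is finite-index-like with two cosets, so $\Gamma_{B,D}*T_2$ is also a line. I would check it directly rather than via the theorem.

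For the second claim, I would work with the Bass--Serre tree of spaces. Write $G = A *_C$ with stable letter $t$, so that $tCt^{-1} = C'$, and use the open compact subgroup $C$. A Cayley--Abels graph $\Gamma_{G,C}$ can be built from the generators of $A$ over $C$ together with $t$. Its vertex set $G/C$ is partitioned into copies $gA/C$ of $\Gamma_{A,C}$, one for each vertex of the Bass--Serre tree. Each coset $gaC$ is joined by $t$-edges to cosets in neighbouring copies, and these copies are arranged along the tree. The obstruction to applying \cref{prop: bilipschitz class of free products of graphs} is that a vertex is not matched with exactly one inter-copy edge. A copy $gA$ is attached to its outgoing neighbours $gatA$ through the cosets of $C$ in $A$, and to its incoming neighbours $ga t^{-1}A$ through the cosets of $C'$ in $A$.

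The key step is to build a net $N \subset \Gamma_{A,C}$ together with two maps. Concretely, I need an assignment of each element of $N$ to a $C$-coset and to a $C'$-coset of $A$ such that both assignments are bijections onto $A/C$ and $A/C'$ respectively, with bounded displacement. Then each net point carries exactly one outgoing and one incoming edge. Collapsing each copy of $A$ to $N$ turns the graph into a graph modelled on a free product of $N$ with a two-vertex graph, i.e.\ $\Gamma_{A,C} * T_2$ up to quasi-isometry, in the sense of a bilipschitz variant of \cref{prop: bilipschitz class of free products of graphs}.

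The main obstacle is that $C$ and $C'$ have different sizes, so no common transversal exists. My plan is to take $N = A/C$ itself, identified with the vertices of $\Gamma_{A,C}$. Outgoing edges then match bijectively, since each $aC$ has a unique $t$-edge. For incoming edges, each $C'$-coset $aC'$ is a union of $[C' : C\cap C']$ cosets of $C\cap C'$, and it meets $[C:C\cap C']$-many $C$-cosets. The index assumption $[A:C]\ge[A:C']$ gives $[C:C\cap C']\ge[C':C\cap C']$ by computing indices via $C\cap C'$. By Hall's marriage theorem, applied locally to the bipartite graph between $A/C$ and $A/C'$ in which cosets are adjacent when they intersect, this gives an injection $A/C' \to A/C$ with bounded displacement. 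Local finiteness and the bounded degrees make Hall applicable on infinite graphs. The $C$-cosets left unmatched receive no incoming edge. I would handle them by noting that every unmatched vertex lies within bounded distance of a matched one. Each unmatched vertex can therefore be given a pendant copy of a finite graph, and such pendant copies can be absorbed into the quasi-isometry. Alternatively, they can be treated as bilipschitz-negligible via \cref{prop: X * Y is quasi-isometric to X * (Y * Y)}. This is the step I would check most carefully, in particular that the resulting quasi-isometry constants are uniform across all copies.
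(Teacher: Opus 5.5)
\textbf{The gap is in the core step.} You want to modify $\Gamma_{G,C}$ boundedly so that every point of a net in each copy of $\Gamma_{A,C}$ carries exactly one outgoing and one incoming inter-copy edge. But a net with both assignments bijective gives a bounded-displacement bijection $A/C\to A/C'$, and such a bijection need not exist. Take $A=\mathbb{Z}^2\times\mathbb{Z}_p$, $C=\mathbb{Z}_p$, $C'=p\mathbb{Z}_p$: every large ball contains about $p$ times as many $C'$-cosets as $C$-cosets.

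Consequently, with $N=A/C$ and a Hall injection (in whichever orientation one exists), a positive-density set of vertices gets no incoming edge. The inter-copy edges then form a mixture of rays and lines, so the graph is not of the form $\Gamma_{A,C}+T_2$. Pendant finite graphs cannot repair this. What is missing at such a vertex is an entire infinite branch of the tree of spaces; if you instead merge unmatched points into matched ones, an infinite branch is in excess. \Cref{prop: X * Y is quasi-isometric to X * (Y * Y)} does not address this situation either.

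The missing idea is a mechanism that makes the in/out imbalance invisible. The paper projects each copy onto an $(R,S)$-net with $R\ge 3$, so every net point carries between $3$ and $K$ inter-copy edges. It then forgets the orientation and uses \cref{prop: all bounded valency trees are bilipschitz equivalent}: all trees with valencies in $[3,K]$ are uniformly bilipschitz equivalent. With this it builds a bilipschitz map copy by copy, following the strategy of \cref{prop: bilipschitz class of free products of graphs}. This proves the first claim directly for non-compact factors. The second claim then follows by comparing $A*_\varphi$ with $A*_{\id}$, where $[C:C\cap C']=[C':C\cap C']=1$ and the one-in-one-out structure holds exactly. Your reduction of the first claim to the second is logically fine. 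However, it puts all the weight on precisely the unbalanced case your tool cannot handle.

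\textbf{Further slips.}
\begin{enumerate}
\item The hypothesis $[A:C]\ge[A:C']$ carries no information when $A$ is non-compact, since both indices are infinite. For compact $A$ it gives $[C:C\cap C']\le[C':C\cap C']$, the reverse of what you state.
\item A $C'$-coset meets $[C':C\cap C']$ cosets of $C$, not $[C:C\cap C']$. The two errors cancel for compact $A$. For non-compact $A$, however, Hall's condition for $A/C'\to A/C$ can fail, as in the example above, so you would have to orient by Haar measure rather than by index.
\item $T_2$ is the $2$-regular tree, i.e.\ a line, not a two-vertex graph. So the second exceptional case of \cref{thm: quasi-isometric factors implies quasi-isometric free products of graphs} never arises for $Z=T_2$. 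Your claim that $\Gamma_{A,C}*T_2$ is a line when $\Gamma_{A,C}$ has two vertices is false: it is quasi-isometric to $T_3$ (compare $\mathbb{Z}/2*\mathbb{Z}$).
\end{enumerate}
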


We prove this proposition in several steps. We first consider the case that $A$ and $B$ are non-compact. Since $C$ and $C'$ are compact open subgroups of $A$, the indices $m = [C : C \cap C']$ and $n = [C' : C \cap C']$ are finite. Let $A$ be compactly generated by $C$ and finitely many elements $g_1, \ldots, g_k$. Then $G$ is compactly generated by $C$, the elements $g_1, \ldots, g_k$ and the stable letter $t$. Denote by $X$ the Cayley--Abels graph $\Gamma_{A, C}$. The Cayley--Abels graph $\Gamma_{G, C}$ can be viewed as a tree of spaces consisting of copies of $X$. In contrast to the free product of graphs, each vertex in a copy of $X$ has $n$ outgoing edges (corresponding to multiplication by $t$) leading to the same copy of $X$, and $m$ incoming edges (corresponding to multiplication by $t^{-1}$) each coming from a different copy of $X$. This highlights some parallels between the tree of spaces $\Gamma_{G, C}$ and the free product $X * T_{m+n}$.

\tikzset{->-/.style={decoration={
  markings,
  mark=at position .5 with {\arrow{>}}},postaction={decorate}}}

\begin{figure}[ht]
    
    \centering
    
        \begin{tikzpicture}

        \draw (0,0) ellipse (0.5 and 1);

        \draw (2,2) ellipse (0.5 and 1);
        \draw (-1,3) ellipse (0.5 and 1);
        \draw (-3,2) ellipse (0.5 and 1);

        \draw[->-,red] (-3,2.5) to (0,0.5);
        \draw[->-,red] (-1,3.5) to (0,0.5);
        \draw[->-,red] (0,0.5) to [bend left] (2,2.5);
        \draw[->-,red] (0,0.5) to (2,2);
        \draw[->-,red] (0,0.5) to [bend right] (2,1.5);

        \fill[black] (0,0.5) circle (2pt);
        \fill[black] (2,2.5) circle (2pt);
        \fill[black] (2,2) circle (2pt);
        \fill[black] (2,1.5) circle (2pt);
        \fill[black] (-1,3.5) circle (2pt);
        \fill[black] (-3,2.5) circle (2pt);

        \draw (2,-2) ellipse (0.5 and 1);
        \draw (-1,-3) ellipse (0.5 and 1);
        \draw (-3,-2) ellipse (0.5 and 1);

        \draw[->-,red] (-3,-1.5) to (0,-0.5);
        \draw[->-,red] (-1,-2.5) to (0,-0.5);
        \draw[->-,red] (0,-0.5) to [bend right] (2,-2.5);
        \draw[->-,red] (0,-0.5) to (2,-2);
        \draw[->-,red] (0,-0.5) to [bend left] (2,-1.5);

        \fill[black] (0,-0.5) circle (2pt);
        \fill[black] (2,-2.5) circle (2pt);
        \fill[black] (2,-2) circle (2pt);
        \fill[black] (2,-1.5) circle (2pt);
        \fill[black] (-1,-2.5) circle (2pt);
        \fill[black] (-3,-1.5) circle (2pt);
        
        \end{tikzpicture}

    \caption{A section of the tree of spaces $\Gamma_{G,C}$ for $m = 2$ and $n = 3$. The directed edges connecting different subgraphs are drawn in red and correspond to multiplication by $t$. The red edges induce $5$-regular graphs.}
        
\end{figure}

Let $Y = \Gamma_{B, D}$. We will construct nets $M \subseteq X$ and $N \subseteq Y$. Projecting in $\Gamma_{G, C}$ the vertices of $X$ onto $M$ yields a tree of spaces with most of the properties required in \cref{prop: bilipschitz class of free products of graphs}. However, the nets need not be homogeneous. Nevertheless, a careful construction of this new tree of spaces ensures that the proof strategy of \cref{prop: bilipschitz class of free products of graphs}, see \cite{papasoglu+whyte}*{Lemma 1.1}, still applies.

\begin{construction}\label{constr: quasi-isometry projection to net}
    Let $f \colon X \to Y$ be a quasi-isometry. Take an $(R, S)$-net $M$ in $X$ containing the base vertex $1_X$. Then its image $N = f(M)$ is a net in $Y$. If $R$ is large enough (depending only on the quasi-isometry constants of $f$), the restriction $f|_M \colon M \to N$ is injective, hence a bilipschitz equivalence. Since $Y$ is vertex-transitive, we may compose with an appropriate isometry so that $N$ contains the base vertex $1_Y$ and $f(1_X) = 1_Y$.

    Let $X_0$ be the base copy of $X$ in $\Gamma_{G,C}$ and let $M_0$ be the copy of $M$ in $X_0$. Consider a vertex $v \in V(X_0)$ with outgoing edges $(v, w_1), \ldots, (v, w_n)$ to a copy $X_1$ of $X$. Remove all these edges except $(v, w_1)$ and, using vertex-transitivity, identify $w_1$ with the base vertex of $X_1$.
    
    Let $(w, v_i)$ be the edges arriving at vertices $v_1, \ldots, v_n \in V(X_0)$ coming from a vertex $w$ from some adjacent copy $X_1$. If $S$ is large enough, there exists a vertex $m \in M_0$ such that $d(v_i, m) \le S$ for all $i$. Replace the edges $(w, v_i)$ by a single edge $(w, m)$, and identify $w$ with the base vertex of $X_1$.

    Project every vertex of $X_0$ to a nearest vertex of $M_0$; this vertex may not be unique. If $R \ge 3$, each vertex of $M_0$ has at least $3$ edges connecting it to different copies of $X$. The number of edges connecting a vertex of $M_0$ to a different copy of $X$ is bounded, depending only on $m, n, R$ and $S$.

    Let $X_1$ be a copy of $X$ adjacent to the base net $M_0$. Repeat the steps above for all vertices of $X_1$. Repeat this procedure first for every copy of $X$ adjacent to $M_0$ and then iterate for every copy of $X$ of distance $k$ to the base net $M_0$ for $k \to \infty$.
\end{construction}

\cref{constr: quasi-isometry projection to net} yields a quasi-isometry from the tree of spaces $\Gamma_{G, C}$ to a tree of spaces $\hat{X}$ built from copies of $M$ and trees of bounded valency, where the trees are built from the edges connecting distinct copies of $M$. Similarly, we obtain $\hat{Y}$ for $\Gamma_{H, D}$ using the net $N$. We want to construct a bilipschitz equivalence $\hat{X} \rightarrow \hat{Y}$. Before doing so, recall that trees of bounded valency are not only quasi-isometric but bilipschitz equivalent to each other.

\begin{prop}[\cite{papasoglu}]\label{prop: all bounded valency trees are bilipschitz equivalent}
    Let $K \ge 3$ and let $(T_i)_i$ be a family of trees such that for every $i$ any vertex $v \in T_i$ has valency between $3$ and $K$. Then there is an $L \ge 1$, depending only on $K$, such that between any two trees $T_i$ and $T_j$ there is an $L$-bilipschitz equivalence $T_i \to T_j$.
\end{prop}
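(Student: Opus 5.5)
We plan to compare every such tree with the $3$-regular tree $T_3$. If we find, for each tree $T_i$, an $L'$-bilipschitz equivalence $T_i \to T_3$ with $L'$ depending only on $K$, then composing one such map with the inverse of another gives the claim with $L = L'^2$. The strategy has two parts. First, build an explicit quasi-isometry $T_3 \to T$ with uniformly controlled constants and fibres. Then upgrade it to a bijection at bounded distance using Hall's marriage theorem and the non-amenability of trees of minimal valency $3$.

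\emph{Step 1 (blowing up vertices).} Let $T$ be a tree with all valencies in $[3, K]$. Replace each vertex $v$ of valency $d$ by a finite tree $P_v$ with $d-2$ vertices, each of valency $3$ in the resulting graph. One choice is a path of $d-2$ vertices; its two endpoints carry two of the $d$ edges at $v$ each, and every interior vertex carries one. The result is a $3$-regular tree, hence $T_3$. Collapsing each $P_v$ back to $v$ gives a surjection $\pi \colon T_3 \to T$. Every fibre of $\pi$ is connected, has between $1$ and $K-2$ vertices and has diameter at most $K$. Hence $\pi$ is $1$-Lipschitz and a quasi-isometry whose constants depend only on $K$.

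\emph{Step 2 (Hall conditions).} Fix $r \ge 0$ to be chosen depending only on $K$. Consider the bipartite relation $w \sim v$ for $w \in V(T_3)$ and $v \in V(T)$ defined by $d(\pi(w), v) \le r$. This relation is locally finite, with degrees bounded in terms of $K$ and $r$. We check Hall's condition on both sides.
\begin{itemize}
\item For a finite set $A \subseteq V(T)$, its neighbourhood contains $\pi^{-1}(A)$, which has at least $|A|$ elements.
\item For a finite set $B \subseteq V(T_3)$, we have $|\pi(B)| \ge |B|/(K-2)$. The neighbourhood of $B$ is the $r$-neighbourhood of $\pi(B)$ in $T$.
\end{itemize}
For the second item we use the isoperimetric inequality in a tree of minimal valency $3$. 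If $A$ is finite, then $A$ spans a forest with at most $|A|-1$ edges. Since every vertex has valency at least $3$, at least $3|A| - 2(|A|-1) \ge |A|$ edges leave $A$. Their endpoints outside $A$ number at least $|A|/K$, so $|N_1(A)| \ge (1 + 1/K)|A|$. Iterating gives $|N_r(A)| \ge (1+1/K)^r |A|$. Choosing $r$ with $(1+1/K)^r \ge K-2$ yields the Hall condition for $B$.

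\emph{Step 3 (perfect matching).} Both Hall conditions hold for a locally finite bipartite graph. By Hall's theorem for locally finite infinite graphs (via compactness/König), there are injections $V(T) \to V(T_3)$ and $V(T_3) \to V(T)$ along the relation. A Schröder--Bernstein argument then gives a bijection $\phi \colon V(T_3) \to V(T)$ with $\phi(w) \sim w$ for all $w$. This step is the classical infinite marriage theorem, and it is where I expect the main care to be needed; the rest is bookkeeping of constants.

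Since $d(\phi(w), \pi(w)) \le r$, the bijection $\phi$ lies at bounded distance from the quasi-isometry $\pi$. Hence $\phi$ satisfies $\frac{1}{\lambda} d - c \le d(\phi(\cdot), \phi(\cdot)) \le \lambda d + c$ with $\lambda, c$ depending only on $K$. Distinct vertices are at distance at least $1$ and $\phi$ is injective. Therefore the additive constants can be absorbed into the multiplicative ones, giving an $L'$-bilipschitz map with $L' = L'(K)$. This completes the plan.
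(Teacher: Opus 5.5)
Your argument is correct, but it is not what the paper does. The paper gives no proof of this statement and simply quotes Papasoglu's theorem on trees.

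What you have written is the marriage-theorem proof of Whyte's result, specialised to trees: a quasi-isometry between non-amenable bounded-geometry graphs lies at bounded distance from a bijection. The blow-up map $\pi\colon T_3\to T$ has non-empty fibres of at most $K-2$ vertices. It is a quasi-isometry with constants depending only on $K$, because every edge of $T$ lifts to exactly one edge of $T_3$ and the fibres are subtrees of diameter at most $K-3$. The isoperimetric bound $|N_1(A)|\ge(1+1/K)|A|$ supplies Hall's condition on the $T_3$-side, and fibre surjectivity gives it on the $T$-side. Schr\"{o}der--Bernstein, applied to the two one-sided matchings $f$ and $g$, then yields a bijection that still lies inside the relation, since it agrees pointwise with either $g$ or $f^{-1}$.

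Compared with the bare citation, your route makes the dependence of $L$ on $K$ completely explicit: $r$ is fixed by $(1+1/K)^r\ge K-2$, and this determines all constants. That uniformity is exactly what the paper relies on when it glues infinitely many tree pieces together to build the bilipschitz equivalence $\hat{X}\to\hat{Y}$. Your argument also applies with only the isoperimetric step changed to any pair of non-amenable bounded-geometry graphs. The citation is shorter; your proof is self-contained modulo the infinite marriage theorem (Hall's theorem plus a compactness argument).
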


\begin{construction}\label{constr: bilipschitz equivalence between nets}
    Following the proof idea of \cite{papasoglu+whyte}*{Lemma 1.1}, we will construct a bilipschitz equivalence $\varphi \colon \hat{X} \to \hat{Y}$. We start with the base net $M_0$ in $\hat{X}$. We have a bilipschitz equivalence $f \colon M_0 \to N_0$ mapping the base vertex of $M_0$ to the base vertex of $N_0$. Define $\varphi(m) := f(m)$ for every vertex $m \in M_0$.

    Let $v \in M_0$ be a vertex. Let $T$ be the tree obtained by restricting to edges connecting distinct copies of $M$ and taking the connected component containing $v$. Similarly, we get a tree $T'$ for $\varphi(v)$. By \cref{constr: quasi-isometry projection to net}, both trees have bounded valency and each vertex of those trees has valency at least $3$. Thus, we have a map 
    $$T \xrightarrow[]{\: h \:} T_3 \xrightarrow[]{\: \:} T_3 \xrightarrow[]{\: {h'}^{-1} \:} T'$$
    where $h \colon T \to T_3$ and $h' \colon T' \to T_3$ are the bilipschitz equivalences given by \cref{prop: all bounded valency trees are bilipschitz equivalent} and $T_3 \to T_3$ is an isometry of $T_3$ mapping $h(v)$ to $h'(\varphi(v))$. Since $h$ and $h'$ have bounded bilipschitz constants, the resulting bilipschitz equivalence $\hat{h} \colon T \to T'$ does too. It maps $v$ to $\varphi(v)$. Now define $\varphi(t) := \hat{h}(t)$.

    Now let $T$ be one of the trees connected to $M_0$ and let $t \in V(T) \setminus V(M_0)$. At $t$ we have a copy $M_t$ of $M$ and, by \cref{constr: quasi-isometry projection to net}, $t$ is the base vertex of $M_t$. Similarly, $\varphi(t)$ is the base vertex of a copy $N_t$ of $N$. Again, we have a bilipschitz equivalence $f_t \colon M_t \to N_t$ mapping $t$ to $\varphi(t)$. Define $\varphi(m) := f_t(m)$ for every $m \in M_t$. Repeat this procedure for every vertex $t \in V(T) \setminus V(M_0)$ and then for every tree $T$ connected to $M_0$.

    Finally, iterate this process for copies of $M$ and trees $T$ of increasing distance to the base net $M_0$. As the bilipschitz constant of every bilipschitz equivalence used is bounded depending only on the valency bound of the trees $T$ and the bilipschitz constant of $f \colon M \to N$, we get a bilipschitz equivalence $\varphi \colon \hat{X} \to \hat{Y}$.
\end{construction}

\begin{proof}[Proof of \cref{prop: quasi-isometric factors implies quasi-isometric HNN extensions}]
    By \cref{constr: quasi-isometry projection to net}, we have quasi-isometries $\Gamma_{G, C} \to \hat{X}$ and $\Gamma_{H, D} \to \hat{Y}$ where $\hat{X}$ and $\hat{Y}$ are trees of spaces. By \cref{constr: bilipschitz equivalence between nets}, there is a bilipschitz equivalence $\hat{X} \to \hat{Y}$. Composition of these maps yields a quasi-isometry $\Gamma_{G, C} \to \Gamma_{H, D}$. Thus, $G$ and $H$ are quasi-isometric.

    It remains to discuss the case of a compact vertex group. If $A \neq C$ or $A \neq C'$, then $A *_C$ is quasi-isometric to $T_3$ and has infinitely many ends. In contrast, if $A = C = C'$, then $X$ is quasi-isometric to $T_2$ and has exactly $2$ ends.

    Now let $A$ be a compactly generated tdlc group with a proper compact open subgroup $C$ with no assumption on compactness of $A$. Consider first the HNN extension $G = A *_{\id}$ corresponding to $\id \colon C \to C$. Then $\Gamma_{G, C}$ is quasi-isometric to $\Gamma_{A, C} * T_2$. Now let $C'$ be a compact open subgroup of $A$ and let $\varphi \colon C \to C'$ be an isomorphism. By the first part of \cref{prop: quasi-isometric factors implies quasi-isometric HNN extensions} the HNN extension $A *_{\varphi}$ is quasi-isometric to $A *_{\id}$. Hence, it is quasi-isometric to $\Gamma_{A, C} * T_2$.
\end{proof}

\Cref{prop: quasi-isometric factors implies quasi-isometric amalgams} and \cref{prop: quasi-isometric factors implies quasi-isometric HNN extensions} show one implication of \cref{thm: Papasoglu-Whyte for tdlc groups} for graphs of tdlc groups with a single edge. However, there is still one more base step to consider, namely a graph of tdlc groups without edges. Finally, by iterating these propositions, we extend our result to finite graphs of tdlc groups.

\begin{prop}\label{prop: X * X and X * T_2 are quasi-isometric}
    Let $X$ be an infinite homogeneous graph. Then $X * X$ and $X * T_2$ are quasi-isometric. In particular, if $A$ is a non-compact compactly generated tdlc group with isomorphic compact open subgroups $C$ and $C'$, then the HNN extension $A *_C$ and the amalgam $A *_C A$ are quasi-isometric. 
    
    Furthermore, if additionally $A$ has infinitely many ends, then $A$ is quasi-isometric to both the HNN extension $A *_C$ and the amalgam $A *_C A$.
\end{prop}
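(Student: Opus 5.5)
Since the statement is about free products of graphs, I will first prove the graph statement $X * X \simeq X * T_2$ and then deduce the group statements from \cref{prop: quasi-isometric factors implies quasi-isometric amalgams} and \cref{prop: quasi-isometric factors implies quasi-isometric HNN extensions}.

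For the graph statement, I would view $X * T_2$ as $X * (P * P)$, where $P$ is the graph with two vertices and one edge. Indeed, $P * P$ is a bi-infinite path, hence bilipschitz to $T_2$. Using the associativity remark after \cref{prop: bilipschitz class of free products of graphs} (all factors have at least two vertices), $X * (P*P)$ is quasi-isometric to $(X * P) * P$. Next, in $X * P$ I collapse each copy of $P$ onto its base vertex. This is a quasi-isometry onto the graph in which every vertex of every copy of $X$ is joined to the base point of another copy of $X$, i.e. onto $X * X$.

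So $X * P$ is quasi-isometric to $X * X$. Since $X$ is infinite, the proof of \cref{thm: quasi-isometric factors implies quasi-isometric free products of graphs} (the case with $Z$ finite, $|Z|=2$) gives exactly this. Then $(X*P)*P$ is quasi-isometric to $(X * X) * P$ by \cref{thm: quasi-isometric factors implies quasi-isometric free products of graphs}, applied to the homogeneous infinite graphs $X*P$ and $X*X$ with $Z = P$; the exceptions do not occur because both factors are infinite. By the same case of the proof, $(X*X)*P$ is quasi-isometric to $(X*X)*(X*X)$. By \cref{prop: X * Y is bilipschitz equivalent to X * (Y * Y)} and associativity this is quasi-isometric to $X * X$. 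Altogether $X * T_2 \simeq X * X$.

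For the second claim, \cref{prop: quasi-isometric factors implies quasi-isometric HNN extensions} shows $A *_C \simeq \Gamma_{A,C} * T_2$, after possibly swapping $C$ and $C'$. Here the relevant index is $[A:C]$, and the swap is harmless since $C \cong C'$ are both compact open. \Cref{prop: quasi-isometric factors implies quasi-isometric amalgams} gives $A *_C A \simeq \Gamma_{A,C} * \Gamma_{A,C}$. As $A$ is non-compact, $\Gamma_{A,C}$ is infinite and homogeneous, so the first part applies.

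For the last claim, let $A$ have infinitely many ends. By Stallings' theorem for tdlc groups and a Papasoglu--Whyte type statement, $\Gamma_{A,C}$ should be quasi-isometric to a free product. The cleanest route I would try is to split $A$ once, as $A_1 *_D A_2$ or $A_1 *_D$, so that $\Gamma_{A,C}$ is quasi-isometric to $Y * Z$ with $Z$ either a second factor or $T_2$. Then I would show that $(Y*Z)*(Y*Z)$ is quasi-isometric to $Y*Z$, by rebracketing and absorbing duplicate factors with \cref{prop: X * Y is quasi-isometric to X * (Y * Y)}, and similarly with $T_2$, using $T_2 * T_2 \simeq T_3 \simeq Y*T_2$-type absorptions.

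\textbf{Main obstacle.} This last step is where I expect trouble. The absorption arguments need the factors to satisfy the non-exceptional hypotheses (more than one vertex, not both of size two), and finite or compact factors have to be handled case by case, much as in the proof of \cref{thm: quasi-isometric factors implies quasi-isometric free products of graphs}. If one factor is compact, I would first replace $Y*Z$ by $Y*Y$ or $T_3$ as in that proof and then absorb.
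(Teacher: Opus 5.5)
Your argument for the graph statement is correct, but it takes a different route from the paper's.

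\textbf{The paper's route.} The paper uses the Cartesian product $X \Box C_2$, which is homogeneous and quasi-isometric to $X$. By \cref{thm: quasi-isometric infinite factors implies quasi-isometric free products of graphs}, $X * X \simeq (X \Box C_2) * (X \Box C_2)$. Projecting each copy of $X \Box C_2$ onto one of its two copies of $X$ gives a tree of spaces in which every vertex of every copy of $X$ has exactly two external edges. These edges assemble into lines, and the result is the model of $X * T_2$, i.e.\ the tree of spaces of $A *_{\id}$.

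\textbf{Your route.} You write $T_2 = P * P$ and reduce everything to formal manipulations with results already stated in the paper. These are the associativity remark, the collapse $W * P \simeq W * W$ for infinite $W$ from the proof of \cref{thm: quasi-isometric factors implies quasi-isometric free products of graphs}, that theorem itself, and \cref{prop: X * Y is bilipschitz equivalent to X * (Y * Y)}. Each application is within its hypotheses, so relative to the paper's stated facts your proof is complete and shorter.

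\textbf{Where the content sits.} Your steps after the rebracketing prove $(X*P)*P \simeq X*X$ without any associativity. So, given the Papasoglu--Whyte results, associativity for the triple $(X,P,P)$ is equivalent to the proposition itself. The paper only sketches that remark, via tree amalgamations. Verifying it here amounts to identifying $(X+P)+P$ with $X+T_2$: one must see that the two alternating perfect matchings form lines arranged tree-like around the copies of $X$. That is the same structural identification the paper makes explicitly after its projection. The paper's route is therefore more self-contained and shows geometrically where $T_2$ comes from. Yours is slicker but rests entirely on that remark.

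\textbf{The group statement.} Your deduction of the statement about $A *_C$ and $A *_C A$ from \cref{prop: quasi-isometric factors implies quasi-isometric amalgams} and \cref{prop: quasi-isometric factors implies quasi-isometric HNN extensions} matches the paper. Since $A$ is non-compact, both $[A:C]$ and $[A:C']$ are countably infinite, so no swap is needed.

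\textbf{The last claim.} Your plan is exactly the paper's, and the obstacle you anticipate does not arise. \cref{prop: X * Y is quasi-isometric to X * (Y * Y)} already handles finite factors. Its two exceptions are excluded precisely because $A$ has infinitely many ends:
\begin{itemize}
\item In a nontrivial splitting, both factors have at least two vertices; in the HNN case, use the associated subgroup of larger index.
\item An amalgam over a subgroup of index $2$ in both factors is two-ended, and so is an HNN extension $A_1 *_D$ with $A_1 = D = D'$.
\end{itemize}
For an amalgam you then get $\Gamma \simeq \Gamma_1 * \Gamma_2 \simeq (\Gamma_1 * \Gamma_1) * (\Gamma_2 * \Gamma_2) \simeq (\Gamma_1 * \Gamma_2) * (\Gamma_1 * \Gamma_2) \simeq \Gamma * \Gamma$. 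For an HNN extension you get $\Gamma \simeq \Gamma_1 * T_2 \simeq \Gamma_1 * (T_2 * T_2) \simeq \Gamma * T_2$. The first part of the proposition then yields both claimed quasi-isometries. No case distinction on compact factors is needed. You should also drop ``$T_3 \simeq Y * T_2$'': it fails for infinite one-ended $Y$, and the argument never uses it.
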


\begin{proof}
    Consider the Cartesian product of graphs $X \Box C_2$, which consists of two copies of $X$ with edges connecting them. By projecting$X \Box C_2$ onto one of the two copies of $X$, we get a quasi-isometry $X \Box C_2 \rightarrow X$. As $X$ is homogeneous, so is $X \Box C_2$. By \cref{thm: quasi-isometric infinite factors implies quasi-isometric free products of graphs}, the free products $X * X$ and $(X \Box C_2) * (X \Box C_2)$ are quasi-isometric. In $(X \Box C_2) * (X \Box C_2)$ project every vertex in each copy of $X \Box C_2$ onto one copy of $X$. This yields a quasi-isometry from $(X \Box C_2) * (X \Box C_2)$ to $X * T_2$.

    Now let $G$ be a compactly generated tdlc group with infinitely many ends and let $\Gamma$ be a Cayley--Abels graph of $G$. By Stallings' ends theorem, $G$ splits as an amalgam $A *_C B$ over a compact open subgroup such that $C$ has index at least $2$ in one factor and index at least $3$ in the other or as an HNN extension $A *_C$ over two isomorphic compact open subgroups $C$ and $C'$ such that $A \neq C$.
    
    Let $\Gamma_A = \Gamma_{A, C}$ and $\Gamma_B = \Gamma_{B, C}$ be the Cayley--Abels graphs. If $G = A *_C B$, then by \cref{prop: quasi-isometric factors implies quasi-isometric amalgams} and \cref{prop: X * Y is quasi-isometric to X * (Y * Y)} we get quasi-isometries
    $$\Gamma \rightarrow \Gamma_A * \Gamma_B \rightarrow \Gamma_A * \Gamma_A * \Gamma_B * \Gamma_B \rightarrow (\Gamma_A * \Gamma_B) * (\Gamma_A * \Gamma_B) \rightarrow \Gamma * \Gamma.$$
    If $G = A *_C$, then by \cref{prop: quasi-isometric factors implies quasi-isometric HNN extensions} and \cref{prop: X * Y is quasi-isometric to X * (Y * Y)} we get quasi-isometries
    $$\Gamma \rightarrow \Gamma_A * T_2 \rightarrow \Gamma_A * T_2 * T_2 \rightarrow \Gamma * T_2.$$
    In either case $\Gamma$ is quasi-isometric to $\Gamma * \Gamma$ or $\Gamma * T_2$ and thus to both.
\end{proof}

\begin{thm}\label{thm: quasi-isometric factors implies quasi-isometric graph of groups}
    Let $G$ and $H$ be compactly generated tdlc groups with infinitely many ends. Let $(\graph, \progr)$ and $(\mathcal{H}, \mathcal{B})$ be decompositions of $G$ and $H$ into finite graphs of tdlc groups with compact open edge groups. If $(\graph, \progr)$ and $(\mathcal{H}, \mathcal{B})$ have the same set of quasi-isometry classes of vertex groups, then $G$ and $H$ are quasi-isometric.
\end{thm}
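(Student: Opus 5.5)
We reduce the theorem to a normal form: we show that $G$ is quasi-isometric to an iterated free product of Cayley--Abels graphs of its non-compact vertex groups, together with $T_2$ (or $T_3$) factors. The key is to collapse the graph of groups one edge at a time. Choose a maximal tree $\mathcal{T}$ in $\graph$. The fundamental group is obtained by first amalgamating the vertex groups along the edges of $\mathcal{T}$, and then performing one HNN extension for each edge outside $\mathcal{T}$. For each step we fix Cayley--Abels graphs $X_v = \Gamma_{\progr_v, U_v}$ for compact open subgroups $U_v$.

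For an amalgam $A *_C B$ along a tree edge, \cref{prop: quasi-isometric factors implies quasi-isometric amalgams} gives a quasi-isometry to $\Gamma_{A,C} * \Gamma_{B,C}$; this holds with no index assumption. Compact groups $A$ with $C$ a proper subgroup contribute finite factors, and $A=C$ contributes a single vertex, which is harmless. Inductively, the amalgam over $\mathcal{T}$ is quasi-isometric to an iterated free product $X_{v_1} * \cdots * X_{v_k}$. Here we use that $+$ is associative and that $*$ agrees with it up to quasi-isometry. For each remaining edge, the second part of \cref{prop: quasi-isometric factors implies quasi-isometric HNN extensions} replaces $A *_C$ by $\Gamma_{A,C} * T_2$. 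Since $G$ has infinitely many ends, at least one factor is non-trivial in the sense required by \cref{thm: quasi-isometric factors implies quasi-isometric free products of graphs}.

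Next we simplify the normal form.
\begin{itemize}
\item Finite factors (compact vertex groups and $T_2$ factors) are absorbed, as in the proof of \cref{thm: quasi-isometric factors implies quasi-isometric free products of graphs}. Projecting onto base points turns a finite factor into extra copies of the infinite factors. By \cref{prop: X * Y is quasi-isometric to X * (Y * Y)} and \cref{prop: X * X and X * T_2 are quasi-isometric}, duplicates and $T_2$ factors can be added or removed freely once at least one infinite factor, or an infinitely-ended configuration, is present.
\item Repeated quasi-isometry classes are merged: replace each $X_v$ by a fixed representative $Z_j$ of its quasi-isometry class using \cref{thm: quasi-isometric factors implies quasi-isometric free products of graphs}, then collapse $Z_j * Z_j$ to $Z_j$ using \cref{prop: X * Y is quasi-isometric to X * (Y * Y)} together with commutativity and associativity.
\end{itemize}
The result is that $G$ is quasi-isometric to $Z_1 * \cdots * Z_r$, possibly with a $T_2$ factor. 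Here $Z_1,\dots,Z_r$ represent the distinct quasi-isometry classes of infinite vertex groups. By \cref{prop: X * X and X * T_2 are quasi-isometric}, the $T_2$ factor is irrelevant when $r\ge 1$. When $r=0$, everything is quasi-isometric to $T_3$. The same normal form holds for $H$, so $G$ and $H$ are quasi-isometric.

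The main obstacle is bookkeeping the exceptional cases with one or two vertices in \cref{thm: quasi-isometric factors implies quasi-isometric free products of graphs} and \cref{prop: X * Y is quasi-isometric to X * (Y * Y)}. These are exactly where index-$2$ amalgams or trivial compact factors appear. We handle them by inserting an extra infinite-ended factor first, say $Z_1 * Z_1$ or $T_3$; this is legitimate because $G$ has infinitely many ends. With that factor in place, every subsequent replacement involves factors with at least three vertices or infinite ones.
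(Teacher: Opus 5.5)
Your strategy is the paper's: reduce to a normal form built from free products of Cayley--Abels graphs, absorb finite factors, merge duplicate classes, and compare via \cref{thm: quasi-isometric factors implies quasi-isometric free products of graphs}. However, the normal form you reach is wrong when there is exactly one quasi-isometry class of non-compact vertex groups, i.e.\ $r=1$. \Cref{prop: X * Y is quasi-isometric to X * (Y * Y)} only removes a duplicate factor in the presence of some \emph{other} factor $X$, so it never yields $Z_1 * Z_1 \simeq Z_1$. Likewise, \cref{prop: X * X and X * T_2 are quasi-isometric} gives $Z_1 * T_2 \simeq Z_1 * Z_1$, not $Z_1 * T_2 \simeq Z_1$. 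Your claim that the $T_2$ factor is irrelevant is correct for $r \ge 2$, where $W * T_2 \simeq W * W \simeq W$ for $W = Z_1 * \cdots * Z_r$. It is false for $r=1$. Concretely, take $A$ one-ended and $C \le A$ compact open. Then $A *_C A$ and $A *_C$ have infinitely many ends, yet your procedure declares them quasi-isometric to the one-ended graph $Z_1 = \Gamma_{A,C}$. The conclusion $G \simeq H$ is true only because both get the same false normal form; your argument does not establish it in this case.

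The repair is the paper's case distinction. For $r=1$ the normal form must be $Z_1 * T_2$ (equivalently $Z_1 * Z_1$), and you must check that every $G$ with $r=1$ reaches it rather than a bare $Z_1$. If there is an HNN edge, a non-trivial compact factor, or a second non-compact vertex, you get $Z_1 * T_2$ or $Z_1 * Z_1$ directly. The only remaining configuration is a single vertex with no edges, so $G = \progr_v$. There you need that $G$ has infinitely many ends, via the last part of \cref{prop: X * X and X * T_2 are quasi-isometric}, to get $\Gamma \simeq \Gamma * T_2$. This is where the hypothesis genuinely enters, and your proposal never invokes it there.

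Your device of ``inserting an extra infinite-ended factor first'' does not substitute for this, for two reasons. First, a non-trivial free factor can be added without changing the quasi-isometry type only in the specific situations above (a duplicate next to another factor, or $T_2$ next to the Cayley--Abels graph of an infinitely-ended group). Justifying the insertion therefore needs exactly those facts. Second, the exceptional cases of \cref{thm: quasi-isometric factors implies quasi-isometric free products of graphs} in your inner induction involve only the three graphs $\Gamma_{A,C}$, its replacement, and $\Gamma_{B,C}$. A globally inserted factor does not remove them without a reassociation argument you have not given.
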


\begin{proof}
    As $G$ decomposes as a finite graph of groups $(\graph, \progr)$, it can be written as iterated amalgams (for edges of a spanning tree $\tree$ of $\graph$) followed by iterated HNN extensions (for edges outside of $\tree$). Without loss of generality, assume that each occurring amalgam is non-trivial.
    
    First suppose that $G$ consists of a single vertex group with Cayley--Abels graph $\Gamma$. Depending on whether or not $\graph$ has edges, $G$ is quasi-isometric to $\Gamma * T_2$ either by \cref{prop: quasi-isometric factors implies quasi-isometric HNN extensions} and \cref{prop: X * Y is quasi-isometric to X * (Y * Y)} or by \cref{prop: X * X and X * T_2 are quasi-isometric} (as $G$ has infinitely many ends). Now suppose that $G$ has multiple vertex groups and denote their Cayley--Abels graphs by $\Gamma_1, \ldots, \Gamma_n$. As every amalgam is non-trivial, each $\Gamma_i$ has at least two vertices. \Cref{prop: quasi-isometric factors implies quasi-isometric amalgams} and \cref{prop: quasi-isometric factors implies quasi-isometric HNN extensions} show that $G$ is quasi-isometric to
    $$\Gamma_1 * \cdots * \Gamma_n * T_2 * \cdots * T_2$$
    where we get $T_2$ as a factor for every occurring HNN extension. If all factors are finite, i.e. $(\graph, \progr)$ has only compact vertex groups, then, under the assumption of having infinitely many ends, $G$ is quasi-isometric to $T_3$.

    So, suppose that not all vertex groups are compact. By \cref{prop: X * Y is quasi-isometric to X * (Y * Y)}, depending on whether or not $\graph$ is a tree, $G$ is quasi-isometric to
    $$\Gamma_1 * \cdots * \Gamma_n \text{ or } \Gamma_1 * \cdots * \Gamma_n * T_2.$$
    Next, we get rid of finite factors. Let $F$ be the free product of all finite factors. It is finite or quasi-isometric to $T_2$ or $T_3$. Let $\Lambda$ be the free product of all the infinite $\Gamma_i$. If $F$ is finite with $|F| \ge 2$, then we have quasi-isometries $F * \Lambda \to \Lambda * \Lambda \to \Lambda * T_2$ by \cref{prop: X * X and X * T_2 are quasi-isometric}. If $F$ is quasi-isometric to $T_2$ or $T_3$, then $F * \Lambda$ is again quasi-isometric to $\Lambda * T_2$. Finally, we may assume by \cref{prop: X * Y is quasi-isometric to X * (Y * Y)} that the factors of $\Lambda$ are pairwise not quasi-isometric. The only exception is if all factors of $\Lambda$ are quasi-isometric to a graph $\Delta$, in which case $\Lambda$ is quasi-isometric to $\Delta * \Delta$, which is quasi-isometric to $\Delta * T_2$ by \cref{prop: X * X and X * T_2 are quasi-isometric}.

    Let $S$ be the set of quasi-isometry classes of non-compact vertex groups of $(\graph, \progr)$ and let $\Delta_1, \ldots, \Delta_n$ be graphs representing each quasi-isometry class of $S$ once. Then, $G$ is quasi-isometric to\dots
    \begin{itemize}
        \item \dots $T_3$ if $S = \emptyset$, i.e. all vertex groups of $(\graph, \progr)$ are compact;

        \item \dots $\Delta_1 * T_2$ if $S$ contains only a single quasi-isometry class;

        \item \dots $\Delta_1 * \cdots * \Delta_n$ if $|S| \ge 2$.
    \end{itemize}
    The theorem now follows immediately from \cref{thm: quasi-isometric factors implies quasi-isometric free products of graphs}.
\end{proof}

Now we will prove the other implication of \cref{thm: Papasoglu-Whyte for tdlc groups}. Fortunately, the proofs of Papasoglu--Whyte hold almost verbatim with only little preparation needed.

\begin{prop}\label{prop: amalgam inclusion is quasi-isometric embedding}
    Let $A$ and $B$ be compactly generated tdlc groups with a common compact open subgroup $C$. Let $G = A *_C B$ be an amalgam of tdlc groups. Then the inclusions $A \xhookrightarrow{} G$ and $B \xhookrightarrow{} G$ are quasi-isometric embeddings.
\end{prop}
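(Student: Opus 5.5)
We work with Cayley--Abels graphs. Fix compact generating data: $A = \langle C, a_1, \ldots, a_k\rangle$ and $B = \langle C, b_1, \ldots, b_l\rangle$ with symmetric finite sets of extra generators. Then $G = \langle C, a_1,\ldots,a_k,b_1,\ldots,b_l\rangle$. Let $X = \Gamma_{A,C}$, $Y = \Gamma_{B,C}$ and $\Gamma = \Gamma_{G,C}$ be the Cayley--Abels graphs of \cref{constr: definition of a Cayley--Abels graph} built from these generators. The inclusion $A \hookrightarrow G$ induces a map $\iota\colon A/C \to G/C$, $aC \mapsto aC$. Since $C$ is compact open, it suffices to show that $\iota$ is a quasi-isometric embedding $X \to \Gamma$. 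The orbit maps $A \to X$ and $G \to \Gamma$ are quasi-isometries for the word metrics with respect to compact generating sets, and they commute with the inclusions. The case of $B$ is symmetric.

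The map $\iota$ is injective, since $A \cap gC$ is either empty or a single left $C$-coset of $A$. It is also $1$-Lipschitz: every edge of $X$ is witnessed by some $a_i$, which is also a generator of $G$, so edges of $X$ map to edges of $\Gamma$. The real content is the lower bound $d_X(aC, a'C) \le d_\Gamma(aC, a'C)$.

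For the lower bound we use the normal form theorem (equivalently, Bass--Serre theory) for $A *_C B$. Take a geodesic in $\Gamma$ from $aC$ to $a'C$. It corresponds to a word $a^{-1}a' \in s_1 s_2 \cdots s_r C$, with each $s_j$ an extra generator and $r$ the path length; elements of $C$ can be absorbed because edges are defined up to $C$. Group the letters into maximal syllables lying alternately in $A$ and $B$. Since $a^{-1}a' \in A$, the normal form theorem forces some $B$-syllable $\beta = b_{j_1}\cdots b_{j_p}$ to lie in $C$ whenever any $B$-letters appear; otherwise the product would have a reduced form of length $\ge 2$, or would lie outside $A$. Deleting such a syllable, up to pushing the element of $C$ into an adjacent $A$-letter, shortens the word and merges neighbouring $A$-syllables. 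Iterating, we obtain a word using only the letters $a_i$ together with elements of $C$. The number of $a_i$-letters is at most $r$, and $C$-elements can be pushed through to the right end via $c\,a_i \in a_{i'}' C$-type relations.

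The main obstacle is this last point. A $C$-element inserted between $a_i$-letters need not commute past $a_i$, because $c a_i$ need not lie in $\bigcup_{i'} a_{i'} C$. The cleanest fix is to argue directly in the graph: the projection $\pi\colon \Gamma \to X$ is given by the tree structure. The Bass--Serre tree of $G$ is obtained by collapsing copies of $X$ and $Y$ in $\Gamma$. We map every vertex $gC$ to the nearest-point projection, in the tree-of-spaces sense, of its copy onto the base copy $X_0$, namely the attaching vertex through which one leaves $X_0$. Adjacent vertices of $\Gamma$ either lie in the same copy of $X$, or cross into a branch hanging off a single vertex of $X_0$. Hence $\pi$ is $1$-Lipschitz and restricts to the identity on $X_0 = \iota(X)$. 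This gives $d_X \le d_\Gamma$ on $\iota(X)$, so $\iota$ is even an isometric embedding for these choices of Cayley--Abels graphs.

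The work thus reduces to verifying from the normal form theorem that $\Gamma$ really is such a tree of spaces, with copies $gX$ and $gY$ glued along single cosets $gC$, in the same way as for \cref{prop: quasi-isometric factors implies quasi-isometric amalgams}.
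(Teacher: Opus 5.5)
Your proof is correct, but its decisive step differs from the paper's.

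\textbf{The paper's route.} The paper stays with word metrics for the compact generating sets $C\cup\{a_i\}$, $C\cup\{b_j\}$ and $C\cup\{a_i,b_j\}$. A $G$-word for $a\in A$ is split into alternating syllables. The normal form theorem then produces a syllable evaluating to some $c\in C$. Since $c$ is itself a generator, replacing that syllable by the single letter $c$ and merging neighbours never increases length. Iterating yields an $A$-word, so $|a|_A\le|a|_G$.

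\textbf{The obstacle you flagged.} It is self-inflicted. A path of length $r$ from $C$ to $gC$ in $\Gamma_{G,C}$ means $g\in Cs_1Cs_2C\cdots Cs_rC$, not $g\in s_1\cdots s_rC$. If you keep the $C$-factors as cost-free letters lying in $A\cap B$, the syllable reduction closes exactly as in the paper. You never need to commute anything past an $a_i$.

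\textbf{Your fallback.} The retraction $\pi$ of $\Gamma_{G,C}$ onto the base copy $X_0=A/C$ via the Bass--Serre tree is nonetheless valid. Two points should be made explicit.
\begin{enumerate}
\item Copies of $X$ and $Y$ share vertices, so the tree is the incidence graph with vertex set $G/A\sqcup G/B$ and edge set $G/C$. It is not a quotient obtained by collapsing the copies.
\item A $b_j$-edge cannot join two distinct vertices of $X_0$, because $A\cap B=C$. This is what makes $\pi$ $1$-Lipschitz into $X$, and not merely into $X_0\subseteq\Gamma$.
\end{enumerate}

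\textbf{What each route buys.} The paper's argument needs only the normal form theorem. Yours needs the tree-of-spaces structure of $\Gamma_{G,C}$, but yields an explicit Lipschitz retraction. That retraction transfers more directly to HNN extensions and to vertex groups of arbitrary finite graphs of groups, where it becomes a coarse retraction.
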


\begin{proof}
    As $A$ and $B$ are compactly generated, $G$ is compactly generated as well. Hence, there are finitely many $a_1, \ldots, a_m \in A$ and $b_1, \ldots, b_n \in B$ such that
    \begin{gather*}
        A = \left< C, a_1, \ldots, a_m \right>, B = \left< C, b_1, \ldots, b_n \right> \\ \text{ and } G = \left< C, a_1, \ldots, a_m, b_1, \ldots, b_n \right>.
    \end{gather*}
    We show that, using these compact generating sets, the inclusion $A \xhookrightarrow{} G$ is an isometric embedding. Let $a \in A$. Clearly $|a|_G \le |a|_A$. Let $g_1 \cdots g_k$ be a word over $C$ and the $a_i$ and $b_i$ evaluating to $a$. Write $g_1 \cdots g_k = \alpha_1 \beta_1 \cdots \alpha_\ell \beta_\ell$ where $\alpha_j$ is a word over $C$ and the $a_i$, and $\beta_j$ is a word over $C$ and the $b_i$. Consider reduced forms of elements of amalgams, see \cite{lyndon+schupp}*{Chapter IV.2}. As $a \in A$, one reduced form of $a$ is $a$ itself. Since all reduced forms have the same length, if $\alpha_1 \beta_1 \cdots \alpha_\ell \beta_\ell$ is a reduced form of $a$, then $g_1 \cdots g_k = \alpha_1$ is already a word over $C$ and the $a_i$. Now assume that $\alpha_1 \beta_1 \cdots \alpha_\ell \beta_\ell$ is not reduced. Then some $\alpha_i$ or $\beta_i$ evaluates to an element $c \in C$. In the case of $\alpha_i$ evaluating to $c$ the word
    $$
        \alpha_1 \beta_1 \cdots \alpha_{i-1} (\beta_{i-1} c \beta_i) \alpha_{i+1} \cdots \alpha_\ell \beta_\ell
    $$
    is of shorter or equal length evaluating to $a$. Iterating this process, we eventually get a reduced form of $a$ that is of shorter or equal length than $g_1 \cdots g_k$. Hence, $|a|_A \le |a|_G$ and $A \xhookrightarrow{} G$ is an isometric embedding. A symmetric argument shows that for the given compact generating sets $B \xhookrightarrow{} G$ is also an isometric embedding.
\end{proof}

For HNN extensions $G = A *_C$ of tdlc groups with associated subgroups $C$ and $C'$, we choose generating sets $A = \left< C, C', a_1, \ldots, a_n \right>$ and $G = \left< C, C', a_1, \ldots, a_n, t \right>$. Using reduced forms of elements of HNN extensions, the analogous statement of \cref{prop: amalgam inclusion is quasi-isometric embedding} also holds. We get the following.

\begin{prop}\label{prop: finite graph of groups inclusion is quasi-isometric embedding}
    Let $G$ be a compactly generated tdlc group with a decomposition $(\graph, \progr)$ into a finite graph of tdlc groups with compact open edge groups. Then for every vertex $v \in V(\graph)$ the inclusion $\progr_v \xhookrightarrow{} G$ is a quasi-isometric embedding.
\end{prop}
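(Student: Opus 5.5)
We argue by induction on the number of edges of $\graph$. The idea is to peel off one edge at a time, writing $G$ as an amalgam or HNN extension, and to compose the quasi-isometric embeddings supplied by \cref{prop: amalgam inclusion is quasi-isometric embedding} and its HNN analogue. If $\graph$ has no edges, then $G = \progr_v$ and the statement is trivial.

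For the inductive step, pick an edge $e$ of $\graph$. There are two cases.

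\emph{Case 1: $e$ separates $\graph$ into subgraphs $\graph_1$ and $\graph_2$.} Then $G = G_1 *_{\progr_e} G_2$, where $G_i$ is the fundamental group of the restricted graph of groups $(\graph_i, \progr|_{\graph_i})$. We must check that each $G_i$ is a compactly generated open subgroup of $G$. It is open because it contains the open subgroup $\progr_e$. It is compactly generated if the vertex groups are, and we have to verify that this holds. Indeed, each vertex group is a retract-like factor: $G$ is compactly generated, and by Bass--Serre theory, or directly by \cite{cornulier+delaharpe}, vertex groups of a finite graph of groups with compact open edge groups in a compactly generated group are compactly generated. The cleanest route is to observe that $G$ acts on the Bass--Serre tree with compact open edge stabilizers and finite quotient, and to invoke the standard fact that vertex stabilizers are then compactly generated. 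Alternatively, one can argue via a Cayley--Abels graph, since vertex stabilizers are cocompact in the preimage of a connected subgraph. Given this, \cref{prop: amalgam inclusion is quasi-isometric embedding} shows that $G_i \hookrightarrow G$ is a quasi-isometric embedding. By induction, $\progr_v \hookrightarrow G_i$ is a quasi-isometric embedding for $v \in \graph_i$.

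\emph{Case 2: $e$ is non-separating.} Then $G = G_0 *_{\progr_e}$ is an HNN extension, where $G_0$ is the fundamental group of the graph of groups on $\graph \setminus e$. Here the HNN analogue of \cref{prop: amalgam inclusion is quasi-isometric embedding} applies. It uses the generating sets $\left< C, C', a_1, \ldots, a_n, t \right>$ together with Britton's lemma and reduced forms, which is the same shortening argument as before: a subword $t^{\pm1} c\, t^{\mp1}$ with $c \in C$ or $c \in C'$ can be replaced by a single element of $C'$ or $C$ without increasing length. This gives $G_0 \hookrightarrow G$ as a quasi-isometric embedding, and induction applies to $G_0$.

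A composition of quasi-isometric embeddings is a quasi-isometric embedding. Moreover, the choice of compact generating sets changes word metrics only up to bilipschitz equivalence, since any two compact generating sets give quasi-isometric word metrics. So the specific generating sets chosen in the base propositions do no harm.

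The main obstacle is ensuring that the intermediate groups $G_i$ and $G_0$ are compactly generated, so that the base propositions apply. The first approach to try is to note that the whole graph of groups is generated by the vertex groups and the stable letters. It therefore suffices to show that every vertex group $\progr_v$ is compactly generated, for then $G_i$ is generated by finitely many compactly generated open subgroups. In the setting of this paper, namely terminal decompositions, vertex groups are compactly generated by definition. In general this follows from the cocompact action on the Bass--Serre tree with compact open edge stabilizers.
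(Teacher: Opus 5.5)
Your proposal is correct and follows the paper's route. The paper writes $G$ as iterated amalgams and HNN extensions, obtains an isometric embedding at each step from reduced forms (with $C$, $C'$ and the stable letter $t$ in the generating set for the HNN case), and composes; you do the same by induction on edges, and you also make explicit that the intermediate fundamental groups are compactly generated, a point the paper leaves implicit.
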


\begin{thm}\label{thm: quasi-isometric accessible groups implies quasi-isometric vertex groups}
    Let $G$ be an accessible tdlc group with a decomposition $(\graph, \progr)$ into a terminal graph of tdlc groups. If $H$ is an accessible tdlc group quasi-isometric to $G$, then any decomposition $(\mathcal{H}, \mathcal{B})$ of $H$ into a terminal graph of tdlc groups has the same set of quasi-isometry classes of one-ended vertex groups as $(\graph, \progr)$.
\end{thm}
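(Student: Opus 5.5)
Given a quasi-isometry $f \colon G \to H$, I will show that for every one-ended vertex group $\progr_v$ of $(\graph, \progr)$ the image $f(\progr_v)$ lies in a bounded neighbourhood of a coset $h\mathcal{B}_w$ of some one-ended vertex group of $(\mathcal{H}, \mathcal{B})$. This follows Papasoglu--Whyte \cite{papasoglu+whyte}. By symmetry, and since we can pass to quasi-inverses, we then get a quasi-isometry $\progr_v \to \mathcal{B}_w$. The whole argument takes place in Cayley--Abels graphs. Let $\Gamma_H$ be a Cayley--Abels graph of $H$ built from a compact open subgroup contained in an edge group. It is quasi-isometric to a tree of spaces over the Bass--Serre tree $T_H$ of $(\mathcal{H}, \mathcal{B})$. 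The vertex spaces are coset copies of the Cayley--Abels graphs $\Gamma_{\mathcal{B}_w}$. The edge spaces are copies of the finite sets $\mathcal{B}_e$-cosets, which are uniformly bounded because the edge groups are compact and open. By \cref{prop: finite graph of groups inclusion is quasi-isometric embedding}, each vertex space is quasi-isometrically embedded with uniform constants.

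First step: the one-ended pieces are coarsely separated only by bounded sets. Removing the (uniformly bounded) edge space corresponding to an edge of $T_H$ disconnects $\Gamma_H$ into the two halves lying over the two subtrees. Now $\progr_v$ is one-ended, and by \cref{prop: finite graph of groups inclusion is quasi-isometric embedding} it is quasi-isometrically embedded in $G$. Hence $f(\progr_v)$ is a coarsely connected, coarsely one-ended subset. For every $r$ there is an $R$ such that removing a ball of radius $r$ from $\progr_v$ leaves exactly one unbounded $R$-coarse component. I then project $f(\progr_v)$ to $T_H$. If the image is unbounded in $T_H$ in two directions, or is not contained in a bounded neighbourhood of a single vertex space, then some edge space (a bounded set) coarsely separates $f(\progr_v)$ into two unbounded parts. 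Pulling back by the quasi-inverse gives a bounded set coarsely separating $\progr_v$ into two unbounded pieces, which contradicts one-endedness.

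The main obstacle is to rule out $f(\progr_v)$ going off along a ray of $T_H$ through infinitely many vertex spaces, or lying in the neighbourhood of infinitely many finite-diameter vertex spaces. Here I use that edge spaces have uniformly bounded diameter. A coarse path in $f(\progr_v)$ connecting points over vertices of $T_H$ at tree distance $k$ must pass near each of the $k-1$ intermediate edge spaces. If the projection to $T_H$ had diameter greater than some constant $D$, I would choose an edge in the middle of the projected subtree. Both sides of that edge would then contain points of $f(\progr_v)$ arbitrarily far from it, since $\progr_v$ is unbounded and one-ended and the pieces are quasi-isometrically embedded, and this gives the separation. This requires a careful quantitative formulation of coarse separation, as in \cite{papasoglu+whyte}*{Theorem 0.4}. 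Once that is done, $f(\progr_v)$ lies within bounded distance of the vertex spaces over a set of bounded tree-diameter. Since that set is connected and bounded, and the vertex spaces over it are joined by bounded edge spaces, it lies within bounded distance of a single vertex space $h\Gamma_{\mathcal{B}_w}$ in which $f(\progr_v)$ is coarsely dense.

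Final step: $f(\progr_v)$ lies in a bounded neighbourhood of $h\mathcal{B}_w$, and $\mathcal{B}_w$ is non-compact because $f(\progr_v)$ is unbounded. Applying the same argument to a quasi-inverse $\bar f$ sends $h\mathcal{B}_w$ near some coset $g\progr_{v'}$. Then $\progr_v$ lies in a bounded neighbourhood of $g\progr_{v'}$. Two distinct vertex spaces in the tree of spaces over $T_G$ have bounded coarse intersection, since their intersection is near an edge space. So $\progr_v$ and $g\progr_{v'}$ are the same vertex space up to bounded distance, and $f$ restricts to a quasi-isometry $\progr_v \to \mathcal{B}_w$. Finally, $\mathcal{B}_w$ is not compact and has at most one end, and it is quasi-isometric to the one-ended group $\progr_v$, so it is one-ended. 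Symmetry gives the reverse inclusion of the sets of quasi-isometry classes.
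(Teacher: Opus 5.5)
Your proposal follows the same route as the paper, which likewise combines \cref{prop: finite graph of groups inclusion is quasi-isometric embedding} with the boundedness of the compact edge groups and then runs the arguments of \cite{papasoglu+whyte}*{Lemma 3.2 and Theorem 3.1}: first put one-ended quasi-isometrically embedded vertex groups near a single vertex space, then compare via a quasi-inverse. One step of your sketch should be tightened: bounded tree-diameter of the projection does not by itself put $f(\progr_v)$ near a single vertex space, because $T_H$ has infinite valence at vertices with non-compact vertex groups, and the ray case is likewise not excluded by points ``arbitrarily far'' on both sides; what handles both is a uniform bound, independent of the centre (by vertex-transitivity of a Cayley--Abels graph of $\progr_v$), on the diameter of the bounded components of the complement of an $r$-ball, applied edge by edge.
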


\begin{proof}
    Let $v \in V(\mathcal{H})$ be a vertex such that $\mathcal{B}_v$ is one-ended. By \cref{prop: finite graph of groups inclusion is quasi-isometric embedding}, the inclusion $\mathcal{B}_v \xhookrightarrow{} H$ is a quasi-isometric embedding. As $G$ and $H$ are quasi-isometric, $\mathcal{B}_v$ embeds quasi-isometrically into $G$. Since the finitely many edge groups of $(\graph, \progr)$ are compact, they have bounded diameter. We follow the proofs of \cite{papasoglu+whyte}*{Lemma 3.2 and Theorem 3.1} verbatim and conclude that $\mathcal{B}_v$ is quasi-isometric to a one-ended vertex group of $(\graph, \progr)$. A symmetric argument shows that $(\graph, \progr)$ and $(\mathcal{H}, \mathcal{B})$ have the same set of quasi-isometry classes of one-ended vertex groups.
\end{proof}

\begin{proof}[Proof of \cref{thm: Papasoglu-Whyte for tdlc groups}]
    For tdlc groups with infinitely many ends, one implication follows immediately from \cref{thm: quasi-isometric factors implies quasi-isometric graph of groups}. For zero- and one-ended tdlc groups, there is nothing to show, as their only terminal decomposition into a graph of tdlc groups is the trivial one. For two-ended tdlc groups, it follows from the fact that these groups form a single quasi-isometry class and that they only split terminally as a single HNN extension $A *_C$ or a single amalgam $A *_C B$ with compact factors. The other implication follows immediately from \cref{thm: quasi-isometric accessible groups implies quasi-isometric vertex groups}.
\end{proof}

\section{Non-quasi-isometric simple tdlc groups}\label{sec: non-quasi-isometric simple tdlc groups}

We denote by $\S$ the class of compactly generated non-discrete tdlc groups that are topologically simple. Smith \cite{smith}*{Theorem 38} constructed $2^{\aleph_0}$ pairwise non-isomorphic abstractly simple groups in $\S$. He did this by introducing a group $\universal(M, N)$ associated to a pair of permutation groups $(M, N)$, which generalizes the Burger--Mozes \cite{burger+mozes}*{3.2} universal group $\universal(F)$ associated to a single permutation group $F$. In this section we will strengthen Smith's result by constructing $2^{\aleph_0}$ pairwise non-quasi-isometric abstractly simple groups in $\S$.

For a thorough introduction to and formal definition of the group $\universal(M, N)$, we refer the reader to Smith \cite{smith}. Let $X$ and $Y$ be two disjoint non-empty possibly infinite sets and let $M \le \Sym(X)$ and $N \le \Sym(Y)$ be permutation groups. Denote by $\tree$ the $(|X|, |Y|)$-biregular tree with bipartition $V(\tree) = V_X \cup V_Y$, where every vertex in $V_X$ has valency $|X|$ and every vertex in $V_Y$ has valency $|Y|$. For a vertex $v \in V(\tree)$ denote by $B(v)$ the set of vertices adjacent to $v$. We endow every permutation group with its permutation topology, i.e. the pointwise stabilizers of finite subsets form a basis of identity neighborhoods. This topology makes any permutation group Hausdorff and totally disconnected.

The \emph{Smith universal group} $G = \universal(M, N)$ is a subgroup of $\aut(\tree)$ and acts on $\tree$ such that for every vertex $v \in V(\tree)$ the induced subgroup of the stabilizer $G_v$ in $\Sym(B(v))$ is topologically isomorphic to $M$ if $v \in V_X$ and to $N$ if $v \in V_Y$. If $M$ and $N$ both are transitive permutation groups, then $G$ splits as the amalgam $G_v *_{G_{(v, w)}} G_w$ where $v \in V_X$ and $v \in V_Y$ are adjacent vertices. We summarize the most important topological properties of the Smith group.

\begin{prop}[\cite{smith}*{Theorem 1 and Theorem 30}]\label{prop: main properties of Smith groups}
    Let $M \le \Sym(X)$ and $N \le \Sym(Y)$ be non-trivial permutation groups. Then the following hold.

    \begin{enumerate}
        \item If $M$ and $N$ are closed, then $\universal(M, N)$ is closed.
        \item Suppose $M$ and $N$ are closed. Then $\universal(M, N)$ is locally compact if and only if all edge stabilizers of $\universal(M, N)$ are compact if and only if all point stabilizers of $M$ and of $N$ are compact.
        \item Suppose $M$ and $N$ are closed and compactly generated with compact point stabilizers. If $M$ and $N$ have finitely many orbits and one of them is transitive, then $\universal(M, N)$ is compactly generated.
        \item $\universal(M, N)$ is discrete if and only if $M$ and $N$ act freely.
        \item Suppose $M$ and $N$ are generated by point stabilizers. Then $\universal(M, N)$ is abstractly simple if and only if $M$ or $N$ is transitive.
    \end{enumerate}
\end{prop}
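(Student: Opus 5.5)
This proposition is cited from Smith, so the plan is to reconstruct its proof from the definition of $\universal(M, N)$ as a subgroup of $\aut(\tree)$. We treat $\tree$ as a bipartite tree with a $G$-invariant bipartition. Each vertex $v$ carries a legal colouring $c_v \colon B(v) \to X$ or $Y$. An automorphism $g$ lies in $\universal(M,N)$ exactly when every local action $c_{g v}\circ g \circ c_v^{-1}$ lies in $M$ or $N$, as appropriate. All topological statements refer to the permutation topology on $\aut(\tree)$. We take the items in order, since each later item uses the earlier ones.

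For (1), the defining condition is local: it is a condition on $g|_{B(v)}$ for each $v$. Suppose $g_\alpha \to g$ pointwise. Then for fixed $v$ the restrictions $g_\alpha|_{B(v) \cup \{v\}}$ eventually agree with $g$ on any finite subset. Hence the local action of $g$ at $v$ is a pointwise limit of elements of $M$ (resp. $N$), and closedness of $M$ and $N$ gives $g \in G$.

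For (2), the edge stabilizer $G_{(v,w)}$ is an inverse limit over balls. Its image in $\aut$ of the ball of radius $k$ around the edge is built by iterated extensions whose fibres are point stabilizers $M_x$ or $N_y$. So $G_{(v,w)}$ is compact iff these fibres are compact. Local compactness is equivalent to some compact open stabilizer, which reduces to the same condition. Conversely, a compact point stabilizer of $M$ would be obtained as a quotient of a compact edge stabilizer.

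For (3), transitivity of one group and finitely many orbits of the other give finitely many $G$-orbits on vertices and edges. Compact generation of $M$ and $N$ lifts to compact generation of the vertex stabilizers $G_v$. Then a Bass--Serre argument, with $G$ generated by the vertex stabilizers of a finite fundamental domain together with finitely many hyperbolic elements, gives compact generation of $G$.

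For (4), $G$ is discrete iff some vertex stabilizer is finite-and-trivial on a neighbourhood. Local freeness of $M$ and $N$ propagates trivial action outward from a fixed edge. A nonfree point stabilizer, in contrast, yields nontrivial elements fixing arbitrarily large balls.

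Item (5) is the main obstacle. The plan is to use Tits' independence property, which $\universal(M,N)$ has by construction, together with the fact that $G$ is generated by edge stabilizers when $M$ and $N$ are generated by point stabilizers. Tits' simplicity theorem then gives simplicity of $G^+$, provided the action is not fixing an end or a proper subtree. We have $G^+=G$ because of point-stabilizer generation. Transitivity of $M$ or $N$ ensures the action is minimal and of general type, which completes the "if" direction. For the converse, if neither group is transitive, the orbit structure gives a nontrivial $G$-invariant labelling. This labelling yields a homomorphism to a nontrivial quotient, for instance via a colouring invariant modulo orbits, so $G$ is not simple. I expect verifying the Tits hypotheses without local compactness, and constructing this quotient, to be the hardest step.
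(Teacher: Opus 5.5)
The paper gives no proof of this proposition; it only cites Smith, so your reconstruction has to stand on its own. Your outline of (1)--(4) is reasonable, with one caveat. In (2) and in the converse of (4) you use that \emph{every} point stabiliser $M_x$, $N_y$ occurs as a local fibre at vertices arbitrarily far from any given finite set. This holds for Smith's legal colourings. It fails for an arbitrary family of independent bijections $c_v$: if $M$ and $N$ each have a point with trivial stabiliser, choose the $c_v$ so that every vertex sees its neighbour towards a fixed edge in such a colour. Then that edge has trivial stabiliser, although $M$ need not act freely. So this property must be stated and checked.

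The genuine gap is in (5). You claim that $G$ is generated by edge stabilisers whenever $M$ and $N$ are generated by point stabilisers; this is false in general.

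\emph{What point-stabiliser generation gives.} It only gives $G_v=\langle G_{(v,w)} : w\in B(v)\rangle$ for each vertex $v$. Indeed, $G_{(B(v))}\le G_{(v,w)}$, and $G_{(v,w)}$ maps onto the stabiliser of the colour of $w$ under $G_v\to G_v/G_{(B(v))}\cong M$.

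\emph{What governs $G=G^+$.} Whether $G$ is generated by its vertex stabilisers is a global question. By Bass--Serre theory, $G/\langle G_u : u\in V(\tree)\rangle\cong\pi_1(G\backslash\tree)$. Using the independence property, one checks that $G\backslash\tree$ is the complete bipartite graph $K_{k,l}$, where $k$ and $l$ are the numbers of orbits of $M$ on $X$ and of $N$ on $Y$.

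\emph{The ``if'' direction.} If $M$ or $N$ is transitive, $K_{k,l}$ is a star, hence a tree, and only then do you get $G=G^+$. This is where transitivity enters the ``if'' direction, in addition to giving minimality and no fixed end. Tits's theorem needs neither local compactness nor local finiteness, so the obstacle you anticipate there is not real.

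\emph{The ``only if'' direction.} If neither group is transitive, $K_{k,l}$ contains a cycle. Then $G$ surjects onto a free group of rank $(k-1)(l-1)\ge 1$, so $G$ is not simple. This is the idea missing from your ``only if'' direction: a $G$-invariant labelling by itself yields no homomorphism. Moreover, this is exactly the case where your assertion $G=G^+$ fails, so the two halves of your argument for (5) contradict each other.
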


\begin{prop}\label{prop: Smith group point stabilizer quasi-isometric to M or N}
    Let $M \le \Sym(X)$ and $N \le \Sym(Y)$ be closed compactly generated permutation groups with compact point stabilizers. Then all point stabilizers in $G = \universal(M, N)$ are also compactly generated and:

    \begin{enumerate}
        \item point stabilizers $G_v$ for $v \in V_X$ are quasi-isometric to $M$;

        \item point stabilizers $G_w$ for $w \in V_Y$ are quasi-isometric to $N$.
    \end{enumerate}
\end{prop}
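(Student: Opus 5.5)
Fix $v \in V_X$ and consider the stabilizer $G_v$. The plan is to compare $G_v$ with $M$ through the homomorphism $\pi \colon G_v \to \Sym(B(v))$. By the defining property of $\universal(M,N)$, the image $\pi(G_v)$ is topologically isomorphic to $M$, once $B(v)$ is identified with $X$ by the legal labelling. So $\pi$ is a continuous surjection $G_v \to M$ with kernel $K = \ker\pi = G_{(B(v))}$, the pointwise stabilizer of $B(v)$ in $G_v$.

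The key point is that $K$ is compact. Take $w \in B(v)$. Then $K \le G_{(v,w)}$, the edge stabilizer, which is compact by \cref{prop: main properties of Smith groups}(2) because point stabilizers of $M$ and $N$ are compact. Moreover $K$ is closed, being an intersection of stabilizers in a closed subgroup of $\aut(\tree)$ with the permutation topology. Hence $K$ is compact.

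Next I check that $\pi$ is open, i.e.\ a quotient map. One route is to use that $G_v$ is closed in the locally compact group $G$ (part (2)), so it is a $\sigma$-compact tdlc group once we know it is compactly generated, and then apply the open mapping theorem. To avoid circularity I would argue directly instead. The image of the open subgroup $G_{(v,w)}$ is the point stabilizer $M_x$. Indeed, Smith's construction allows arbitrary local actions to be extended, so every element of $M_x$ lifts to an element fixing $w$. Since $M_x$ is open in $M$, and $G_{(v,w)}$ is open in $G_v$ and contains $K$, $\pi$ is open. Then $G_v/K \cong M$ as topological groups.

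Compact generation follows. Pick a compact generating set $S$ of $M$. Since $\pi$ is open and $K$ is compact, the preimage of a compact set is compact, so $\pi^{-1}(S)$ is compact and generates $G_v$ together with $K$. Finally, a continuous open surjection of compactly generated tdlc groups with compact kernel is a quasi-isometry with respect to word metrics. Concretely, pick a compact open $U \le G_v$ containing $K$. Then $\pi$ induces a $G_v$-equivariant isomorphism between Cayley--Abels graphs $\Gamma_{G_v,U}$ and $\Gamma_{M,\pi(U)}$, using the same generators, since $G_v/U \cong M/\pi(U)$. This gives (1), and (2) follows symmetrically with $w \in V_Y$ and $N$.

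The main obstacle is the surjectivity and openness step, namely that $\pi(G_{(v,w)}) = M_x$. It relies on the extension property of $\universal(M,N)$, that any local permutation from $M$ at $v$ extends to the whole tree. I would cite this from Smith's construction (legal colourings, as in Burger--Mozes) rather than reprove it.
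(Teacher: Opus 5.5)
Your proof is correct and follows the same route as the paper. The paper cites Smith's Proposition 18 for the topological isomorphism $G_v/G_{(B(v))}\cong M$, Smith's Theorem 31 for compact generation, and Cornulier--de la Harpe, Proposition 4.C.12, for the fact that dividing out the compact kernel is a quasi-isometry; you prove each of these by hand. Two small corrections on the way: the step you call the main obstacle is automatic, because $\pi(g)$ is literally $g|_{B(v)}$, so any preimage in $G_v$ of an element of $M_x$ already fixes $w$. And openness of $\pi$ genuinely requires that $G_{(v,w)}$ is compact, so that $\pi|_{G_{(v,w)}}\colon G_{(v,w)}\to M_x$ is a closed, hence open, surjection; it is not enough that $G_{(v,w)}$ is open and contains $K$.
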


\begin{proof}
    By \cite{smith}*{Theorem 31}, all point stabilizers in $G$ are compactly generated. Now let $v \in V_X$ be a vertex. By \cite{smith}*{Proposition 18}, there is a topological isomorphism $G_v / G_{(B(v))} \to M$ where $G_{(B(v))}$ is the pointwise stabilizer of $B(v)$. As the intersection of compact edge stabilizers the pointwise stabilizer $G_{(B(v))}$ is compact as well. By \cite{cornulier+delaharpe}*{Proposition 4.C.12}, the projection $G_v \to M$ is a quasi-isometry. A symmetric argument shows (2).
\end{proof}

If $M$ and $N$ are transitive, the Smith group $\universal(M, N)$ splits as an amalgam. Thus, we can apply \cref{thm: Papasoglu-Whyte for tdlc groups}.

\begin{prop}\label{prop: non-quasi-isometric local groups imply non-quasi-isometric Smith groups}
    For $i = 1, 2$ let $M_i$ be a simple at most one-ended finitely generated group containing a proper non-trivial finite subgroup $Q_i$. Let $X_i = M_i / Q_i$ be the corresponding coset space. Then $M_i$ acts faithfully on $X_i$ such that $M_i \le \Sym(X_i)$. Consider $\Sym(3)$ as a subgroup of $\Sym(3)$. Then the Smith group $G_i = \universal(M_i, \Sym(3))$ is an abstractly simple group in $\S$. If $M_1$ and $M_2$ are not quasi-isometric, then neither are $G_1$ and $G_2$.
\end{prop}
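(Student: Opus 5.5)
Before the main argument, I will settle the set-up claims. Faithfulness: the kernel of the action of $M_i$ on $M_i/Q_i$ is the normal core of $Q_i$. It is a normal subgroup contained in the proper subgroup $Q_i$, so by simplicity of $M_i$ it is trivial. Hence $M_i \le \Sym(X_i)$.

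The permutation topology on $M_i$ is discrete, because the stabilizer of the base coset is the finite group $Q_i$ and intersecting finitely many conjugates gives a trivial stabilizer of a finite set. So $M_i$ is closed in $\Sym(X_i)$, being discrete in a Hausdorff group and hence closed. It is compactly generated, since it is finitely generated and discrete. Its point stabilizers are the conjugates of $Q_i$, which are finite and hence compact. It is transitive, and it is generated by point stabilizers: these generate a non-trivial normal subgroup, so they generate all of $M_i$ by simplicity. Since $Q_i \neq 1$, the action is not free.

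$\Sym(3)$ acting on three points is closed, compact (finite), transitive and generated by its point stabilizers, the transpositions. Now \cref{prop: main properties of Smith groups} applies:
\begin{itemize}
\item $G_i$ is closed and locally compact with compact edge stabilizers.
\item $G_i$ is compactly generated, since both local groups are transitive.
\item $G_i$ is non-discrete, since $M_i$ does not act freely.
\item $G_i$ is abstractly simple.
\end{itemize}
Abstract simplicity gives topological simplicity, so $G_i \in \S$.

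For the quasi-isometry statement, let $v \in V_X$ and $w \in V_Y$ be adjacent. Both local groups are transitive, so $G_i = (G_i)_v *_{(G_i)_{(v,w)}} (G_i)_w$, with compact open edge group. By \cref{prop: Smith group point stabilizer quasi-isometric to M or N}, $(G_i)_v$ is quasi-isometric to $M_i$ and $(G_i)_w$ is quasi-isometric to $\Sym(3)$, hence compact. The vertex groups therefore have at most one end, since $M_i$ does, so this is a terminal decomposition and $G_i$ is accessible.

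The index of the edge group is $|X_i| = \infty$ in $(G_i)_v$ and $3$ in $(G_i)_w$, so the splitting is non-trivial and $G_i$ has infinitely many ends.

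Suppose $G_1$ and $G_2$ were quasi-isometric. I would apply \cref{thm: Papasoglu-Whyte for tdlc groups}, or directly \cref{thm: quasi-isometric accessible groups implies quasi-isometric vertex groups}: the sets of quasi-isometry classes of one-ended vertex groups coincide. There are two cases.
\begin{itemize}
\item If $M_1$ is one-ended, its class must equal the class of a one-ended vertex group of $G_2$. The only candidate is $(G_2)_v$, which is quasi-isometric to $M_2$. This contradicts the hypothesis that $M_1$ and $M_2$ are not quasi-isometric.
\item If $M_1$ has no ends, then $M_1$ is finite, which cannot hold since $M_1$ contains a proper non-trivial subgroup and has infinite quotient $X_1$. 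To be safe, I note that "at most one-ended" together with finiteness would give $M_1$ quasi-isometric to a point. Then the sets of one-ended classes force $M_2$ to be zero-ended too, and two finite groups are quasi-isometric, contradicting the hypothesis.
\end{itemize}
In either case $M_1$ and $M_2$ would be quasi-isometric, a contradiction.

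The main obstacle I expect is the boundary bookkeeping. The transfer of ends from $M_i$ to $(G_i)_v$ via the quasi-isometry must be justified, along with the precise matching of one-ended classes, including the degenerate case where $M_i$ is finite, which is absorbed by the argument above.
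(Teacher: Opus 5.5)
Your proposal is correct and follows the paper's proof. You verify the hypotheses of \cref{prop: main properties of Smith groups}, use the splitting $G_i = (G_i)_v *_{(G_i)_{(v,w)}} (G_i)_w$ into a terminal decomposition with vertex groups quasi-isometric to $M_i$ and $\Sym(3)$, and conclude via \cref{thm: Papasoglu-Whyte for tdlc groups}, making explicit the zero-ended/one-ended case analysis that the paper leaves implicit. One slip: $M_i$ can be finite (e.g.\ $A_5$ with $Q_i$ of order $2$), so your claims that this ``cannot hold'' and that $|X_i| = \infty$ are false in general. This does not damage the proof, because your fallback argument handles the finite case correctly and the count of ends of $G_i$ is not needed for the direction you use.
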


\begin{proof}
    The permutation group $M_i$ has the conjugates of its finite subgroup $Q_i$ as finite point stabilizers. The intersection of all conjugates is a normal subgroup of $M_i$. As $Q_i$ is a proper subgroup and $M_i$ is simple, the intersection is trivial and $M_i$ acts faithfully on $X_i$. The subgroup generated by all conjugates is also normal in $M_i$. Since $Q_i$ is non-trivial and $M_i$ is simple, $M_i$ is generated by its point stabilizers. The permutation group $\Sym(3)$ acts non-freely and has finite point stabilizers which generate it. As both $M_i$ and $\Sym(3)$ are finitely generated and transitive, by \cref{prop: main properties of Smith groups} the Smith group $G_i$ is an abstractly simple group in $\S$ with compact edge stabilizers.
    
    Furthermore, the Smith group $G_i$ decomposes as the amalgam $G_{v_i} *_{G_{(v_i, w_i)}} G_{w_i}$ with $v_i \in V_{X_i}, w_i \in V_3$ adjacent, see \cite{smith}*{Remark after Lemma 22}. As $G_{v_i}$ is quasi-isometric to $M_i$ by \cref{prop: Smith group point stabilizer quasi-isometric to M or N}, it is also at most one-ended. Similarly, $G_{w_i}$ is quasi-isometric to $\Sym(3)$ and therefore compact. Thus, $G_{v_i} *_{G_{(v_i, w_i)}} G_{w_i}$ is a decomposition into a terminal graph of groups. By \cref{thm: Papasoglu-Whyte for tdlc groups}, $G_1$ and $G_2$ are not quasi-isometric.
\end{proof}

\begin{proof}[Proof of \cref{thm: Uncountably many quasi-isometry classes in S}]
    Let $(M_i)_i$ be the family of $2^{\aleph_0}$ pairwise non-quasi-isometric simple finitely generated torsion groups constructed in \cite{minasyan+osin+witzel}*{Corollary 1.4}. As they are torsion groups, they are at most one-ended by Stallings' ends theorem because non-trivial amalgams and HNN extensions contain elements of infinite order. Let $G_i = \universal(M_i, \Sym(3))$ be the Smith group as described in \cref{prop: non-quasi-isometric local groups imply non-quasi-isometric Smith groups}. Then $(G_i)_i$ is a family of $2^{\aleph_0}$ pairwise non-quasi-isometric abstracly simple groups in $\S$.
    
    By \cite{smith}*{Corollary 39}, there are precisely $2^{\aleph_0}$ isomorphism classes of groups in $\S$. Thus, there also are precisely $2^{\aleph_0}$ quasi-isometry classes of abstractly simple groups in $\S$.
\end{proof}

Lastly, we will slightly adjust the local actions and arrange that the Smith groups contain no lattices. Recall that a lattice in a locally compact group $G$ is a discrete subgroup $\Gamma$ such that $G/\Gamma$ admits a finite $G$-invariant measure. We do so by replacing $\Sym(3)$ by a abstractly simple non-discrete tdlc group not containing any lattices, e.g. a Neretin group.

\begin{ex}
    Let $(M_i)_i$ again be the family of $2^{\aleph_0}$ pairwise non-quasi-isometric simple finitely generated torsion groups constructed in \cite{minasyan+osin+witzel}*{Corollary 1.4}. Let $N = N_d$ be a Neretin group, i.e. the group of almost automorphisms of the $d$-regular tree $T_d$. It is a simple non-discrete tdlc group and it contains no lattices by \cite{bader+caprace+gelander+mozes}*{Theorem 1.1}. Let $Q \le N$ be a compact open subgroup such that $N$ acts on $Y = N/Q$. As $Q$ is simple, $N \le \Sym(Y)$ is a transitive permutation group with compact point stabilizers that generate $N$. Thus, $G_i = \universal(M_i, N)$ is a abstractly simple group in $\S$ by \cref{prop: main properties of Smith groups}.

    Suppose, $G_i$ contains a lattice $\Gamma$. For any open subgroup $O \le G_i$, the subgroup $\Gamma \cap O$ is a lattice in $O$, see \cite{caprace+monod}. In particular, the point stabilizer $(G_i)_w$ for $w \in V_Y$ contains a lattice. Recall that $(G_i)_w$ is quasi-isometric to the local action $N$ by \cref{prop: Smith group point stabilizer quasi-isometric to M or N}. More precisely, $N$ is a quotient of $(G_i)_w$ by a compact normal subgroup. By \cite{raghunathan}*{Theorem 1.13}, any lattice in $(G_i)_w$ induces a lattice in $N$. However, Neretin's group $N$ contains no lattice. Thus, neither does $G_i$. We obtain a family $(G_i)_i$ of $2^{\aleph_0}$ pairwise non-quasi-isometric abstracly simple groups in $\S$ which do not contain any lattice.
\end{ex}

  \bibliography{lit}
  \bibliographystyle{plain}

\end{document}